\newtheorem{theorem}{Theorem}
\newtheorem{proposition}{Proposition}
\newtheorem{lemma}{Lemma}
\newtheorem*{remark}{\textit{Remark}}
\newtheorem{assumption}{Assumption}
\newtheoremstyle{as}
{\topsep}
{\topsep}
{\itshape}
{}
{\scshape}
{.}
{ }
{\thmname{\textbf{\textit{#1}}}\thmnumber{ $\mathbf{(#2)}$}\thmnote{ (#3)}}       
\theoremstyle{as}
\newtheoremstyle{pro}
{\topsep}
{\topsep}
{\itshape}
{}
{\scshape}
{.}
{ }
{\thmname{\textbf{\textit{#1}}}\thmnumber{ $\mathbf{#2}$}\thmnote{ (#3)}}%
\theoremstyle{pro}
\newtheoremstyle{lem}
{\topsep}
{\topsep}
{\itshape}
{}
{\scshape}
{.}
{ }
{\thmname{\textbf{\textit{#1}}}\thmnumber{ $\mathbf{#2}$}\thmnote{ (#3)}}%
\theoremstyle{lem}
\begin{document}
\centerline{\textbf{\large Evolutionary, Mean-Field and Pressure-Resistance}}
\centerline{\textbf{\large Game Modelling of Networks Security}}
\bigskip
\centerline{\textbf{Stamatios Katsikas}\footnote{\scriptsize Centre for Complexity Science, University of Warwick, Coventry, CV4 7AL, UK,  s.katsikas@warwick.ac.uk},\textbf{Vassilli Kolokoltsov}\footnote{\scriptsize Department of Statistics, University of Warwick, Coventry, CV4 7AL, UK, associate member of Informatics Problems of the Federal Research Center "Computer Science and Control" of RAS, v.kolokoltsov@warwick.ac.uk}}
\smallskip
\centerline{\today}
\bigskip
\begin{abstract}
\noindent  
The recently developed mean-field game models of corruption and bot-net defence in cyber-security, the evolutionary game approach to inspection and corruption, and the pressure-resistance game element, can be combined under an extended model of interaction of large number of indistinguishable small players against a major player, with focus on the study of security and crime prevention. In this paper we introduce such a general framework for complex interaction in network structures of many players, that incorporates individual decision making inside the environment (the mean-field game component), binary interaction (the evolutionary game component), and the interference of a principal player (the pressure-resistance game component). To perform concrete calculations with this overall complicated model, we suggest working, in sequence, in three basic asymptotic regimes; fast execution of personal decisions, small rates of binary interactions, and small payoff discounting in time.
\end{abstract}
\bigskip
\noindent\textbf{Keywords}: mean-field game, evolutionary game, pressure-resistance game, counter-terrorism, bot-net defense, cyber-security, inspection, corruption, crime prevention.

\section{Introduction}\label{sec3.1}

The issue of social security and crime prevention dominantly concerns the modern societies. In the traditional terrain of counter-terrorism, corruption and tax evasion, the corresponding authorities in charge struggle to deal with large populations of increasingly informed violating individuals (this term will be used interchangeably with the terms agents or small players). Reversely, in the recently emerging field of cyber-security, large groups of individuals aim to defend their private computers against a lurking cyber-criminal (bot-net defence). Similar reasoning can be asserted for the citizens of a large city defending against a biological weapon (bio-terrorism). The rapid advance in means of interaction, communication and exchange of information has established the individuals' social network as a decisive parameter of their strategies in the above and similar instances. Here we consider agents who are organized in specific social or phenotypic (or even geographical), and behavioural network structures. The central focus of this paper is to investigate the evolution of the complex process where a (very) large number of interacting individuals, susceptible to engage in or be affected by criminal behaviour, decide their strategies subject to a benevolent, or respectively to a malicious, major player's (this term will be used interchangeably with the term principal) pressure, to their individual optimization criterion, and to their (social) environment's influence.

In the real life scenaria we aim to capture with our approach, it is natural to distinguish two main dimensions of structure. The first dimension refers to the individuals' objective distribution among different levels of social, bureaucratic, or phenotypic hierarchy, or to their geographical distribution, in general to any finite partition according to their independent characteristics. One can think for example of tax payers of different bands, employees of different grades, or infected computers/individuals of different degrees. The second dimension refers to the agents' distribution among different types of strategy or behaviour, subject (mainly) to the agents' individual control; say for example the level of tax evasion in the field of inspection games, the extent of bribery acceptance in the field of corruption games, or the level of defence against terrorist activity or a malware  in the fields of counter terrorism and cyber-security respectively. 

Note that our game theoretic approach is developed under the  basic idea of a very large number of non-cooperative, interacting agents playing against (i.e. under the pressure of) a single major player. In principle, our model belongs to the wide class of non-linear Markov games, see e.g., \cite{kol1}, combining under an extended scheme the pressure-resistance, the evolutionary, and the mean-field game approach. 

The pressure-resistance terminology was introduced in \cite{Kol-3}, where ideas captured from evolutionary game theory were extended, including the pressure of a principal player on a large group of interacting agents. Here, the pressure-resistance game component refers to the principal's interference generatin transitions solely on the first dimension of structure (e.g. a benevolent director being able to promote or downgrade interacting bureaucrats, computers and individuals getting infected or recovering subject to a cyber-criminal's and a bio-terrorist's activity respectively). This approach of major and minor players has also been considered for the analysis of mean-field type models, see, e.g., \cite{hu,ben,carm}. 

The evolutionary game component refers to the agents' pairwise interactions, with a particular focus on the established social norms effect, potentially generating transitions on both dimensions of structure. For a general survey on the literature of population dynamics applications on game theory, that is, on evolutionary game theory, see, e.g., \cite{sm,weib,gint,sam,hob,tay,sza}, and references therein. See also \cite{fried1,fried2} for specific application in economics.

The mean-field game component refers to the agents' individual optimization controlled by their strategic position on the second dimension of structure, taking into account the entire population's behaviour. This element of `globally' rational optimization introduces an additional level of complexity compared to \cite{kats,Kol-3}, where optimization strictly upon imitation of successful strategies on the basis of binary comparison of payoffs was considered (purely evolutionary approach). Mean-field games (MFG) were introduced by \cite{lar}, by analogy with the mean-field theory in statistical mechanics, and were also developed independently by \cite{huma} as large population stochastic dynamic games. In principle, they represent a natural extension of earlier work in the economics literature under the assumption of infinite number of players, see, e.g., \cite{Au,dub} for static games, \cite{jov,ber} for dynamic games. The literature on mean-field games is growing really fast, see, e.g., \cite{tem,card,cain,ben0,carm0,gom,bau2}, and references therein for a general survey.

Here we shall work in three asymptotic regimes; fast execution of the agents' personal decisions, weak binary interactions, and small discounting in time. The need to introduce this ternary asymptotic approach is revealed from the analysis of a similar setting conducted by \cite{kol5}, where the distribution of infection in a computers network with a malicious software controlled by a cyber-criminal was described by a stationary MFG model with four states. Whilst the three states model describing the distribution of corruption in a population of bureaucrats under the pressure of a benevolent principal that was studied by \cite{Kol-4}, is solved explicitly without any asymptotic simplifications, the introduction of a fourth state in \cite{kol5} already increases the complexity significantly, such that the need to consider (though not as strongly as we do here) the assumption of large $\lambda$ (fast decisions execution) is critical to obtain descent solutions.

Similarly, for the more complicated $n\times m$ states model we introduce here, the need to consider the three asymptotic regimes mentioned above becomes obvious. In principle, even without working in these regimes one can sometimes obtain explicit but extremely lengthy formulas, not revealing any clearer insights. But also form a practical point of view our asymptotic approach has clear interpretation. Indicatively, an infinitely large transition rate $\lambda$ implies the natural process of immediate execution of personal decisions as long as they have already been taken, while a vanishingly small discounting coefficient $\delta_{dis}$ implies a short planning horizon. Both of the models studied in \cite{kol5,Kol-4}, as well as the extended approach presented in this paper, belong to the category of finite state space mean-field games that were initially considered by \cite{gom1,gom2}. See also \cite{gom3} specifically for socio-economic applications.

In addition to the above applications on corruption, and cyber-security, here we introduce the bio-terrorism interpretation, that is, the defence of a population against a biological weapon. The implementation of game theoretic methods to the analysis of terrorism has been vastly developed ever since the 1980s, with game theory allowing the investigation of different strategic interactions (e.g. terrorists vs government, terrorists vs terrorists, terrorists for sponsors, terrorists for supporters), see, e.g., \cite{san1,san2,san3} and references therein. The additional pairing captured here is civilians vs a bio-terrorist.

We organize the paper as follows. In Section \ref{sec3.2} we introduce our model, specifying explicitly the time-dependent and stationary MFG consistency problems. In Section \ref{sec3.3} we solve the stationary problem in our proposed asymptotic regimes, and we show that the identified solution is a stable fixed point of the corresponding evolutionary dynamics. In Section \ref{sec3.4} we obtain our main result; we construct the class of time-dependent solutions that stay in a neighbourhood of the identified stationary solution. In the terminology of mathematical economics, this stationary solution represents a turnpike (see, e.g., \cite{Kol-1,zas}) for the class of time-dependent solutions. In Section \ref{sec3.5} we summarize our approach and our results.

\section{Formal Model}\label{sec3.2}

Let $H=\{1, \dots , |H|=n\}$ be a finite set characterizing the hierarchical partition of small players inside the environment, say their position in the bureaucratic staircase of an organization. Alternatively, it may describe the extend of individuals' infection to a bio-weapon. Moreover, let $B=\{1, \dots , |B|=m\}$ be a finite set characterizing the behavioural or strategic partition of agents, say the level of compliance with official regulations, the extend of involvement in terrorist activity, or the degree of protection for PCs/citizens against cyber-criminals/bio-terrorists. Then, the states of an agent are given by ordered pairs of the form $(h,b)$, with $h\in H$, $b\in B$, the finite state space being $S=H\times B$.

\begin{remark}\label{re3.2}
In some cases it is reasonable to include an additional zero state, some kind of a rank-less sink, where no choice of $B$ is available, say for example a corrupted civil servant suspended from duty without the potential to be bribed, an infected individual put in quarantine, and so forth. Thus, the state space can be either $S=H\times B$ as initially defined above, or $\tilde S=H\times B \cup \{0\}=S\cup \{0\}$ as alternatively implied with this comment. We shall stick here to the first instance.
\end{remark}

\noindent We distinguish the following three structures.

Firstly, the \textit{decision structure} $(B,E_D,\lambda )$, that is a non-oriented graph with the set of vertices $B$ and the set of edges $E_D$, where an edge $e$ joins the vertices $i$ and $j$ whenever an agent is able to switch between states $(h,i)$ and $(h,j)$. Every such transition in $B$ requires certain random $\lambda$-exponential time. For simplicity, a single parameter $\lambda$ is chosen for all possible transitions. As mentioned, we shall mostly look at the asymptotic regime with $\lambda \rightarrow \infty$. We take the agents to be homogeneous and indistinguishable, in the sense that their strategies and payoffs may depend only on their states, and not on any other individual characteristics. Hence, a decision of an agent in a state $(h,b)$ at any time is given by the \textit{decision matrix} $u=(u_{h b\to h\tilde b})$, expressing his intention to switch from strategy $b$ to $\tilde b$, for all $\tilde b\in B$ such that $\tilde b\neq b$. In our model, we consider agents without mixed strategies, that is, for any state $(h,b)$ the decision vector $(u_{h b\to h\tilde b})$ is either identically zero, when the agent does not wish to change strategy, or there exists one strategy $b_1 \neq b$ such that $u_{hb\to hb_1}=1$, and all the other coordinates of $(u_{h b\to h\tilde b})$ being zero, when the agent wishes to change from strategy $b$ to $b_1$.

Secondly, the \textit{pressure structure} $(H, E_P, q_{jb\to ib})$, that is an oriented graph, where an edge $e$ joins the vertices $j$ and $i$ whenever a major player has the power to upgrade or downgrade the small players from the hierarchy level $j$ to $i$. In this case, coefficients $q_{jb\to ib}$ represent the rates of such transitions in $H$, that is, every such transition requires certain $q_{jb\to ib}$-exponential waiting time. In general, these rates may depend on some principal's control (one can think of some parameter describing his/her efforts, for example his/her budget). We shall not exploit this version here.

Finally, we consider the \textit{evolution structure} that characterizes the change in the distribution of states due to the agents' pairwise interaction (e.g. through exchange of opinions, fight with competitors, effect of established social norms etc.). This can be described by the set of rates $q^s_{s_1\to s_2}/N$, by which an agent in state $s$ can stimulate the transition of another agent from state $s_1$ to state $s_2$. For instance, an honest agent (or even a corrupted one) may help the principal to discover, and therefore punish, the illegal behaviour of a corrupted agent. Note that transitions due to binary interaction can be naturally separated into transitions in $B$ and transitions in $H$, yielding respectively the \textit{behavioural} and the \textit{hierarchical evolution structures}.

\begin{remark}
The scaling $1/N$ for the rates of binary interactions is the standard procedure of making the strength of $N^2$ (number of pairs) binary transitions comparable to the strength of $N$ unilateral transitions. 
\end{remark}

Here we shall ignore the behavioural element of the evolution structure. That is, we shall assume that transition rates $q^s_{s_1\to s_2}/N$ may not vanish only for two states $s_1$, $s_2$ that differ strictly in their $h$-component. Moreover, since we shall work in the asymptotic regime of small binary interactions, it would be helpful to introduce directly a small parameter $\delta_{int}$ discounting the power of these interactions. Then, we shall denote thereafter the corresponding transition rates by $\delta_{int}\cdot q_{h_{1}b\to h_{2}b}^{s}/N$.

\begin{remark}\label{re3.3}
The evolutionary transitions in $B$ represent an alternative to the individual transitions described by the decision structure $(B,E_D,\lambda)$, and can be considered negligible in the limit $\lambda \to \infty$ that we shall look at here. Taking into account a behavioural evolution structure is more appropriate in the absence of a decision structure, which was the case developed by \cite{Kol-4}.
\end{remark}

To introduce a more detailed description of our game theoretic framework, note that the states of the corresponding $N$ players game are the $N$-tuples of the~form, 

$$\{(h_{1},b_{1}),\dots,(h_{N},b_{N})\},$$ 

\noindent where each pair $(h_{i},b_{i})$ describes each of the $N$ players position on the hierarchy and the behaviour axis respectively. Assuming that each player adopts a decision matrix $u$, then the system evolves according to the continuous time Markov chain introduced above, with the corresponding transitions rates as were specified. If we further specify the rewards for staying in each state per unit of time, the transition fees (or costs) for transiting form one state to another, as well as the terminal payoffs corresponding to each state for some finite terminal time, then we shall be working in the setting of a stochastic dynamic game of finite number of players.

As usual in a mean-field game approach, we are interested in estimating the approximate symmetric Nash equilibria. Assuming indistinguishable agents, the system's state space can be reduced to the set $Z^{nm}_{+}$ of vectors $n=(n_{ij})$, $i\in H$, $j\in B$, where $n_{ij}$ denotes the number of agents in state $(i,j)$, and $N=\sum_{ij}n_{ij}$ denotes the (constant) total number of agents. 

Therefore, the initially introduced Markov chain reduces to the Markov chain on $Z^{nm}_{+}$, described by the time-dependent generator:

\begin{align}
\begin{gathered}\label{eq3.1}
L_{N}^{t}F(n)=\sum_{a}^{n}\sum_{\beta}^{m}\sum_{c}^{n}n_{a\beta}\cdot q_{a\beta\to c\beta}\cdot \bigl(F(n_{a\beta}^{c\beta})-F(n)\bigr)\\
+\sum_{a}^{n}\sum_{\beta}^{m}\sum_{c}^{n}\sum_{\gamma}^{n}\sum_{k}^{m}n_{a\beta}\cdot \delta_{int}\cdot q_{a\beta\to c\beta}^{\gamma k}/N\cdot n_{\gamma k}\cdot \bigl(F(n_{a\beta}^{c\beta})-F(n)\bigr)\\
+\sum_{a}^{n}\sum_{\beta}^{m}\sum_{c}^{m}n_{a\beta}\cdot \lambda\cdot u_{a\beta\to ac}\cdot \bigl(F(n_{a\beta}^{ac})-F(n)\bigr),
\end{gathered}
\end{align}

\noindent where the unchanged values in the arguments of function $F$ on the right-hand side are omitted. Equivalently, in the normalized version the system's state space can be reduced to the subset of the probability simplex $\Sigma_{n\times m}^{N}\subseteq\mathbb{R}^{n\times m}$, with vectors of the form $x=(x_{ij})=n/N$, $i\in H$, $j\in B$, where each coordinate represents the occupation density (alternatively the occupation probability) of each state $(i,j)$. 

For the Markov chain on $\Sigma_{n\times m}^{N}$, generator \eqref{eq3.1} can be rewritten in the equivalent form:

\begin{align}
\begin{gathered}\label{eq3.2}
L_{N}^{t}f(x)=\sum_{a}^{n}\sum_{\beta}^{m}\sum_{c}^{n}x_{a\beta}\cdot N\cdot q_{a\beta\to c\beta}\cdot\bigl(f(x+(e_{c\beta}-e_{a\beta})/N)-f(x)\bigr)\\
+\sum_{a,c,\gamma}^{n}\sum_{\beta,k}^{m}x_{a\beta}\cdot N\cdot \delta_{int}\cdot q_{a\beta\to c\beta}^{\gamma k}/N\cdot x_{\gamma k}\cdot N\cdot \bigl(f(x+(e_{c\beta}-e_{a\beta})/N)-f(x)\bigr)\\
+\sum_{a}^{n}\sum_{\beta}^{m}\sum_{c}^{m}x_{a\beta}\cdot N\cdot \lambda\cdot u_{a\beta\to ac}\cdot \bigl(f(x+(e_{ac}-e_{a\beta})/N)-f(x)\bigr),
\end{gathered}
\end{align}

\noindent where $\{e_{ij}\}$ is the standard orthonormal basis in $\mathbb{R}^{n\times m}$. Assuming, additionally, that $f$ is a continuously differentiable function on $\Sigma_{n\times m}^{N}$, and taking its Taylor expansion, in the limit of infinitely many small players $N\to \infty$, we find that the above generator eventually converges to:

\begin{align}
\begin{gathered}\label{eq3.3}
L^{t}f(x)=\sum_{a}^{n}\sum_{\beta}^{m}\sum_{c}^{n}x_{a\beta}\cdot q_{a\beta\to c\beta}\cdot\bigl(\frac{\partial f}{\partial x_{c\beta}}-\frac{\partial f}{\partial x_{a\beta}}\bigr)\\
+\sum_{a}^{n}\sum_{\beta}^{m}\sum_{c}^{n}\sum_{\gamma}^{n}\sum_{k}^{m}x_{a\beta}\cdot \delta_{int}\cdot q_{a\beta\to c\beta}^{\gamma k}\cdot x_{\gamma k}\cdot \bigl(\frac{\partial f}{\partial x_{c\beta}}-\frac{\partial f}{\partial x_{a\beta}}\bigr)\\
+\sum_{a}^{n}\sum_{\beta}^{m}\sum_{c}^{m}x_{a\beta}\cdot \lambda\cdot u_{a\beta\to ac}\cdot \bigl(\frac{\partial f}{\partial x_{ac}}-\frac{\partial f}{\partial x_{a\beta}}\bigr),
\end{gathered}
\end{align}

\noindent or equivalently to the form:

\begin{align}
\begin{gathered}\label{eq3.4}
L^{t}f(x)=\sum_{a\neq c}^{n}\sum_{\beta}^{m}\sum_{c}^{n} (x_{a\beta}\cdot q_{a\beta\to c\beta}-x_{c\beta}\cdot q_{c\beta\to a\beta}) \cdot \frac{\partial f}{\partial x_{c\beta}}\\
+\sum_{a\neq c}^{n}\sum_{\beta}^{m}\sum_{c}^{n}\sum_{\gamma}^{n}\sum_{s}^{m} (x_{a\beta}\cdot\delta_{int}\cdot q_{a\beta\to c\beta}^{\gamma s}\cdot x_{\gamma s}-x_{c\beta}\cdot\delta_{int}\cdot q_{c\beta\to a\beta}^{\gamma s}\cdot x_{\gamma s}) \cdot \frac{\partial f}{\partial x_{c\beta}} \\
+\sum_{\beta}^{m}\sum_{c}^{n}\sum_{s\neq\beta}^{m} (x_{cs}\cdot\lambda\cdot u_{cs\to c\beta}-x_{c\beta}\cdot \lambda\cdot u_{c\beta\to cs}) \cdot \frac{\partial f}{\partial x_{c\beta}}.
\end{gathered}
\end{align}

This is a first order partial differential operator, that generates a deterministic Markov process, whose dynamics are governed by the characteristic equations of $L^{t}$:

\begin{align}
\begin{gathered}\label{eq3.5}
\dot x_{ij}=\sum_{k\neq j}^{m} (x_{ik}\cdot \lambda\cdot u_{ik\to ij}-x_{ij}\cdot \lambda\cdot u_{ij\to ik}) + \sum_{a\neq i}^{n} (x_{aj}\cdot q_{aj\to ij}-x_{ij}\cdot q_{ij\to aj})\\+\sum_{k}^{m}\sum_{a\neq i}^{n}\sum_{\gamma}^{n} (x_{aj}\cdot\delta_{int}\cdot q_{sj\to ij}^{\gamma k}\cdot x_{\gamma k}-x_{ij}\cdot \delta_{int}\cdot q_{ij\to aj}^{\gamma k}\cdot x_{\gamma k}).
\end{gathered}
\end{align}

These calculations make the following result plausible:

\begin{proposition}\label{prop3.5}
Given the Markovian interaction we introduced above consisting of the decision, the pressure-resistance and the evolution structures, if the elements of the matrix-valued function $x=(x_{ij})$ denote the occupation probabilities of states $(i,j)$, and $(u_{i,k\to j})$ is the decision matrix that may depend on time, the evolution of $x$ is given by system \eqref{eq3.5}.
\end{proposition}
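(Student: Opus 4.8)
The plan is to read the statement as the combination of two standard facts. First, the normalized $N$-player generator \eqref{eq3.2} converges, as $N\to\infty$, to the first-order operator \eqref{eq3.3}. Second, a first-order operator of the form $L^{t}f=\sum_{ij}V_{ij}(x)\,\partial f/\partial x_{ij}$ is exactly the generator of the deterministic flow solving $\dot x_{ij}=V_{ij}(x)$, whose characteristic field is read off directly from its coefficients. The computation preceding the statement already performs the formal manipulations; the proof consists in organizing them and controlling the passage to the limit.

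First I would fix a test function $f\in C^{1}(\Sigma_{n\times m}^{N})$ and Taylor-expand each increment in \eqref{eq3.2}, writing
\[
f\bigl(x+(e_{c\beta}-e_{a\beta})/N\bigr)-f(x)=\frac{1}{N}\Bigl(\frac{\partial f}{\partial x_{c\beta}}-\frac{\partial f}{\partial x_{a\beta}}\Bigr)+O(1/N^{2}),
\]
and analogously for the decision increments $e_{ac}-e_{a\beta}$. Upon substitution, the $N$-factors attached to the rates cancel the $1/N$ from the first-order term: one factor $N$ in each of the unilateral and decision terms, and the product $N\cdot(1/N)\cdot N=N$ in the binary-interaction term. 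Collecting leading contributions reproduces \eqref{eq3.3}, the remainders being uniformly $O(1/N)$ on the compact simplex. Regrouping \eqref{eq3.3} by gathering, for each target coordinate $(c,\beta)$, the fluxes that feed $\partial f/\partial x_{c\beta}$ against those that drain it — that is, relabelling indices so that each derivative is multiplied by its net incoming-minus-outgoing rate — yields the flux form \eqref{eq3.4}.

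With \eqref{eq3.4} written as $L^{t}f=\sum_{c,\beta}V_{c\beta}(x)\,\partial f/\partial x_{c\beta}$, reading off $V_{c\beta}$ and relabelling $(c,\beta)\to(i,j)$ gives precisely the right-hand side of \eqref{eq3.5}; this is the method of characteristics, identifying \eqref{eq3.5} as the ODE governing the limiting deterministic process. To upgrade the formal limit to genuine convergence I would invoke the martingale-problem machinery for Markov generators within the nonlinear Markov framework of \cite{kol1}: uniform generator convergence on $C^{1}$ together with tightness of the occupation measures yields relative compactness, every limit point solves the martingale problem for $L^{t}$, and for a first-order operator with Lipschitz field this problem has the unique deterministic solution \eqref{eq3.5}.

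The main obstacle is not the algebra but this rigorous passage to the limit. Controlling the Taylor remainder uniformly is aided by compactness of $\Sigma_{n\times m}$ and boundedness of the rates, but the decision term carries the large parameter $\lambda$, so its remainder is of order $\lambda/N$; the limit $N\to\infty$ must therefore be taken at fixed $\lambda$, with the regime $\lambda\to\infty$ treated separately afterwards. The remaining point is uniqueness of the limiting trajectory, which requires the field in \eqref{eq3.5} to be Lipschitz in $x$, uniformly on the simplex. Since the binary-interaction term is quadratic in $x$ and the decision term involves the (possibly time-dependent, controlled) entries $u_{ij\to ik}$, verifying this Lipschitz property is what secures a well-defined characteristic flow and hence the claimed dynamics.
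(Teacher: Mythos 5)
Your proposal follows essentially the same route as the paper: Taylor expansion of the normalized generator \eqref{eq3.2}, passage to the first-order limiting operator \eqref{eq3.3}--\eqref{eq3.4}, and identification of the characteristic system \eqref{eq3.5} as the deterministic dynamics. The paper itself stops at exactly this formal level --- it states that the calculations make the proposition ``plausible'' and delegates the rigorous convergence argument to \cite{Kol-2} --- so your added sketch of the martingale-problem/tightness step, the Lipschitz condition for uniqueness of the flow, and the observation that the $\lambda/N$ remainder forces taking $N\to\infty$ at fixed $\lambda$ before the regime $\lambda\to\infty$, are compatible completions of the same approach rather than a different one.
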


\begin{remark}\label{re3.4}
For a rigorous explanation (not just the formal description that we provide here) of the Markov chain's convergence to the deterministic process given by \eqref{eq3.5}, see, e.g., \cite{Kol-2}.
\end{remark}

The above general structure is rather complicated. To deal effectively with this complexity, one can distinguish two natural simplifying frameworks: (i) the set of edges is ordered and only the transitions between neighbours are allowed, (ii) the corresponding graph is complete, so that all  transitions are allowed and have comparable rates. We shall choose the second alternative for $B$, and the first alternative for $H$ thinking of it as an hierarchy of agents. Moreover, we shall assume that the binary interaction occurs only within a common level in $H$, ignoring the binary interaction between the agents in different levels of the hierarchy structure. Therefore, for the transition rates $q_{ij\to i+1,j}$ of the pressure structure increasing in $i\in H$, we introduce the shorter notation $q^+_{ij}$, and  for the transition rates $q_{ij\to i-1,j}$ decreasing in $i\in H$, we introduce the notation $q^-_{ij}$. Accordingly, for the transition rates $q^{ik}_{ij\to i+1,j}$ of the hierarchical evolution structure increasing in $i\in H$, we introduce the shorter notation $q^{+k}_{ij}$, and  for the transition rates $q^{ik}_{i\to i-1,j}$ decreasing in $i\in H$, we shall use the notation $ q^{-k}_{ij}$.

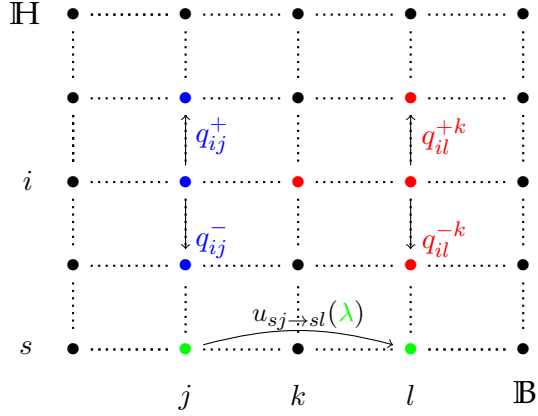
\begin{figure}[H]\centering
\begin{tikzpicture}[scale=0.74]
\node [] (E) at (8,4.5) {\textbullet};
\node[] (D) at (6,4.5) {\color{red}\textbullet};
\node [] (C) at (4,4.5) {\textbullet};
\node [] (B) at (2,4.5) {\color{blue}\textbullet};
\node [] (A) at (0,4.5) {\textbullet};
\node [] (J) at (8,3) {\textbullet};
\node [] (I) at (6,3) {\color{red}\textbullet};
\node [] (H) at (4,3) {\color{red}\textbullet};
\node [] (G) at (2,3) {\color{blue}\textbullet};
\node [] (F) at (0,3) {\textbullet};
\node [] (O) at (8,1.5) {\textbullet};
\node [] (N) at (6,1.5) {\color{red}\textbullet};
\node [] (M) at (4,1.5) {\textbullet};
\node [] (L) at (2,1.5) {\color{blue}\textbullet};
\node [] (K) at (0,1.5) {\textbullet};
\node [] (S6) at (8,6) {\textbullet};
\node [] (S5) at (6,6) {\textbullet};
\node [] (S4) at (4,6) {\textbullet};
\node [] (S3) at (2,6) {\textbullet};
\node [] (S2) at (0,6) {\textbullet};
\node [] (S1) at (8,0) {\textbullet};
\node [] (S) at (6,0) {\color{green}\textbullet};
\node [] (R) at (4,0) {\textbullet};
\node [] (Q) at (2,0) {\color{green}\textbullet};
\node [] (P) at (0,0) {\textbullet};
\path[->,every node/.style={font=\sffamily\small}]
(Q) edge[bend left=15] node [xshift=29,left,yshift=6] {$u_{sj\to sl}(\color{green}\lambda\color{black})$} (S);

\draw[dotted,thick] (A) -- (B) -- (C) -- (D) -- (E);
\draw[dotted,thick] (F) -- (G) -- (H) -- (I) -- (J);
\draw[dotted,thick](K) -- (L) -- (M) -- (N) -- (O);
\draw[<-] (D) -- (I) node[xshift=25,left,yshift=17.5] {$\color{red}q_{il}^{+k}$};
\draw[->] (I) -- (N) node[xshift=25,left,yshift=10] {$\color{red}q_{il}^{-k}$};

\draw[<-] (B) -- (G) node[xshift=20,left,yshift=17.5] {$\color{blue}q_{ij}^{+}$};
\draw[->] (G) -- (L) node[xshift=20,left,yshift=10] {$\color{blue}q_{ij}^{-}$};

\draw[dotted,thick] (S3) -- (S2) node[xshift=-8,left,yshift=0.5] {\large$\mathbb{H}$};
\draw[dotted,thick] (Q) -- (P) node[xshift=-11,left,yshift=0.5] {$s$};
\draw[dotted,thick] (A) -- (F) node[xshift=-11,left,yshift=0.5] {$i$};
\draw[dotted,thick] (A) -- (F) node[pos=0,left,yshift=6] {};
\draw[dotted,thick] (Q) -- (L) node[pos=-0.4,below,yshift=-8] {$j$};
\draw[dotted,thick] (R) -- (M) node[pos=-0.4,below,yshift=-8] {$k$};
\draw[dotted,thick] (S) -- (N) node[pos=-0.4,below,yshift=-8] {$l$};

\draw[dotted,thick] (S1) -- (S) node[pos=-0.25,below,yshift=-8] {\large$\mathbb{B}$};

\draw[dotted,thick] (A) -- (F) node[pos=0,left,yshift=-45] {};
\draw[dotted,thick] (S2) -- (A) -- (F) -- (K) -- (P);
\draw[dotted,thick] (S3) -- (B) -- (G) -- (L);
\draw[dotted,thick] (S4) -- (C) -- (H) -- (M);
\draw[dotted,thick] (S5) -- (D) -- (I) -- (N);
\draw[dotted,thick] (S6) -- (E) -- (J) -- (O) -- (S1) ;
\draw[dotted,thick] (P) -- (Q) -- (R) -- (S) -- (S1);
\draw[dotted,thick] (S2) -- (S3) -- (S4) -- (S5) -- (S6);
\end{tikzpicture}
\caption{The simplified version of our network: only the transitions between neighbours are allowed in $H$, all transitions are allowed in $B$, binary interaction occurs only within a common level in $H$.}
\end{figure}

Applying the above simplifications, the kinetic equations \eqref{eq3.5} reduce to the following system:

\begin{align}
\begin{gathered}\label{eq3.6}
\dot x_{ij}=\lambda\cdot \sum_{k\neq j} (u_{ik\to ij} x_{ik} -u_{ij\to ik} x_{ij})
+q^-_{i+1,j}\cdot x_{i+1,j}+q^+_{i-1,j}\cdot x_{i-1,j}-(q^+_{ij}+q^-_{ij})\cdot x_{ij}\\
+\delta_{int}\cdot\sum_{k\in B}(q^{+k}_{i-1,j}\cdot x_{i-1,k}\cdot x_{i-1,j} + q^{-k}_{i+1,j}\cdot x_{i+1,k}\cdot x_{i+1,j}-(q^{+k}_{ij}+q_{ij}^{-k})\cdot  x_{ik}\cdot  x_{ij}).
\end{gathered}
\end{align}

Note that equations \eqref{eq3.6} hold only for the internal states $(i,j)$, $i\in H$, $j\in B$, such that $i\neq 1, |H|$. On the contrary, for the boundary states $(i,j)$ the terms involving downgrading to $i-1$ and upgrading to $i+1$ respectively are omitted. In particular, we consider:

\begin{equation}\label{eq3.7}
q_{nj}^{+k}=q_{1j}^{-k}=0\quad,\quad q_{nj}^{+}=q_{1j}^{-}=0.
\end{equation}

Additionally, to simplify further the final explicit calculations, for all $i\in H$, $j\in B$, we shall consider the constraint:

\begin{equation}\label{eq3.8}
q_{ij}^+=q^-_{i+1,j},
\end{equation}

\noindent which can be interpreted as a \textit{detailed balance condition}; it actually asserts that the number of downgrades is compensated in average by the number of upgrades.

\begin{remark}\label{re3.5}
An alternative simple (and analogously manageable) model allows the principal either to move an agent one-step
upward in the hierarchy with rates $q^+_{ij}$ and $q^{+k}_{ij}$ respectively, or send an agent directly
down to the lowest state with rates $q^d_{ij}$ and $q^{dk}_{ij}$ respectively. In this case the system describing the evolution of occupation densities becomes, for $i\neq 1$:

\begin{align}
\begin{gathered}\label{eq3.9}
\dot x_{ij}=\lambda\cdot \sum_{k\neq j} (u_{i,k\to j}\cdot x_{ik} -u_{i,j\to k}\cdot x_{ij})
+q^+_{i-1,j}\cdot x_{i-1,j}-(q^+_{ij}+q^d_{ij})\cdot x_{ij}\\
+\delta_{int}\cdot\sum_{k\in B}(q^{+k}_{i-1,j}\cdot x_{i-1,k}\cdot x_{i-1,j}-q^{+k}_{ij}\cdot x_{ik}\cdot x_{ij})  -\delta_{int}\cdot\sum_{k\in B}q^{dk}_{ij}\cdot x_{ik}\cdot x_{ij},
\end{gathered}
\end{align}

\noindent with an obvious modification for $i=1$.
\end{remark}

To identify the agents' optimal decision vector, we need first to define certain game characteristics such as the state rewards and the transition costs. In particular, we assign the reward $w_{ij}$ per unit of time to an agent for staying in state $(i,j)$, the fee/cost $f^B_{kj}$ for an agent's elective transition from state $(h,k)$ to state $(h,j)$ (which we assume independent of $h$ for brevity), and the fine/cost $f^H_j$ for an agent's enforced transition from state $(j,b)$ to state $(j-1,b)$ (which we assume independent of $b$ for brevity). Let, additionally, $g_{ij}=g_{ij}(t)$ be the payoff corresponding to the state $(i,j)$ in the process starting at time $t$ and terminating at time $T$. Then, for an infinitesimally small time step $\tau$, and assuming that $g(t)$ is differentiable in time, an agent at state $(i,j)$ decides his/her strategy targeting to optimize the expression:

\begin{align}
\begin{gathered}\label{eq3.10}
g_{ij}(t)=\max_{u}\bigl\{\tau\cdot w_{ij}+\tau\cdot \bigl(\lambda\cdot u_{ij\to ik}\cdot (g_{ik}(t+\tau)-f_{jk}^{B})+q_{ij}^{-} \cdot(g_{i-1,j}(t+\tau)-f_{i}^{H})\\+q_{ij}^{+}\cdot g_{i+1,j}(t+\tau)+\sum_{k}^{m}x_{ik}\cdot\delta_{int}\cdot (q_{ij}^{+k} g_{i+1,j}(t+\tau)+q_{ij}^{-k} (g_{i-1,j}(t+\tau)-f_{i}^{H}))\bigr)\vspace{-1.5mm}\\+\bigl(1-\tau\cdot (\lambda\cdot u_{ij\to ik}+q_{ij}^{+}+q_{ij}^{-}+\sum_{k}^{m}x_{ik}\cdot \delta_{int}\cdot (q_{ij}^{+k}+q_{ij}^{-k}))\bigr)\cdot g_{ij}(t+\tau)\bigr\}.
\end{gathered}
\end{align}

\begin{remark}\label{re3.6}
Depending on the application we choose to investigate in each instance, the agents' optimum can be either to maximize his/her payoff/fitness, or to minimize his/her cost. Here we stick to the first case, thinking of bribed bureaucrats or defending~civilians.
\end{remark}

Taking the Taylor expansion specifically of the term $g_{ij}(t+\tau)$, and omitting terms of order $O(\tau^{2})$, the~above optimization equation turns into the form:

\begin{align}
\begin{gathered}\label{eq3.11}
w_{ij}+\frac{\partial g_{ij}(t)}{\partial t}+\max_{u}\bigl\{\lambda\cdot u_{ij\to ik}\cdot (g_{ik}(t+\tau)-f_{jk}^{B}-g_{ij}(t))\bigr\}\\+\sum_{k}^{m}x_{ik}\cdot\delta_{int}\cdot \bigl(q_{ij}^{+k}\cdot (g_{i+1,j}(t+\tau)-g_{ij}(t))+q_{ij}^{-k}\cdot (g_{i-1,j}(t+\tau)-f_{i}^{H}-g_{ij}(t))\bigr)\\+q_{ij}^{+}\cdot (g_{i+1,j}(t+\tau)-g_{ij}(t))+q_{ij}^{-}\cdot (g_{i-1,j}(t+\tau)-f_{i}^{H}-g_{ij}(t))=0.
\end{gathered}
\end{align}

In the limit of infinitesimally small time step $\tau\to 0$, equation \eqref{eq3.11} implies the evolutionary Hamilton-Jacoby-Bellman (HJB) equation, that is satisfied by the individual optimal payoffs $g_{ij}$. A rigorous derivation of the HJB equation can be found in every standard textbook on dynamic programming and optimal control, see, e.g., \cite{kamsc}. In particular for stochastic dynamic programming, see, e.g., \cite{ros}.

The above yields the following result:

\begin{proposition}\label{prop3.6}
Given the Markovian interaction we introduced above consisting of the decision, the pressure-resistance and the evolution structure, if $g_{ij}=g_{ij}(t)$ denotes the payoff to an agent at state $(ij)$ in the process starting at time $t$ and terminating at time $T$, and subject to a given evolution of the occupation density vector $x$ given by \eqref{eq3.6}, these individual optimal payoffs $g_{ij}(t)$ will satisfy the following evolutionary HJB equation:

\begin{align}
\begin{gathered}\label{eq3.12}
\dot g_{ij} + \lambda\cdot \max_u \{u_{ij\to ik}\cdot (g_{ik}-g_{ij}-f^B_{jk})\}+q^+_{ij}\cdot (g_{i+1,j}-g_{ij})+q^-_{ij}\cdot (g_{i-1,j}-g_{ij}-f_i^H)\\
+\delta_{int}\cdot \bigl(\sum_{k\in B}q^{+k}_{ij}\cdot x_{ik}\cdot (g_{i+1,j}-g_{ij})  +\sum_{k\in B}q^{-k}_{ij}\cdot x_{ik}\cdot (g_{i-1,j}-g_{ij}-f_i^H) \bigr)+ w_{ij}=0.
\end{gathered}
\end{align}
\end{proposition}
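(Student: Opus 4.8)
The plan is to obtain \eqref{eq3.12} directly from Bellman's principle of optimality, following the route already laid out in \eqref{eq3.10}--\eqref{eq3.11}. The starting point is the observation that, over an infinitesimal horizon $[t,t+\tau]$, an agent currently at state $(i,j)$ either undergoes one of the competing exponential transitions -- an elective switch $j\to k$ at rate $\lambda u_{ij\to ik}$, a forced up/down hierarchical move at rates $q^{\pm}_{ij}$, or an interaction-induced hierarchical move at rates $\delta_{int}q^{\pm k}_{ij}x_{ik}$ -- or it survives in $(i,j)$ with the complementary probability $1-\tau\cdot[\mathrm{total\ rate}]+O(\tau^2)$. Collecting the running reward $\tau w_{ij}$, the continuation payoffs attached to each transition net of the corresponding fees $f^B_{jk}$ and $f^H_i$, and the survival term, and then optimising over the admissible decision $u$, reproduces exactly the one-step dynamic-programming equation \eqref{eq3.10}.

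The second step is the asymptotic expansion as $\tau\to 0$. I would Taylor-expand $g_{ij}(t+\tau)=g_{ij}(t)+\tau\dot g_{ij}(t)+O(\tau^2)$, substitute into \eqref{eq3.10}, and subtract $g_{ij}(t)$ from both sides. The survival contribution $(1-\tau[\mathrm{rate}])g_{ij}(t+\tau)-g_{ij}(t)$ then equals $\tau\dot g_{ij}(t)-\tau[\mathrm{rate}]\,g_{ij}(t+\tau)+O(\tau^2)$; distributing the factor $[\mathrm{rate}]$ over the individual rates and pairing each piece with the matching transition gain converts every gain into a difference $g_{\cdot}-g_{ij}$ (the fees surviving intact). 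Since each such combination already carries a factor $\tau$, replacing $g_{ij}(t+\tau)$ by $g_{ij}(t)$ inside these brackets costs only $O(\tau^2)$ and is harmless. Dividing through by $\tau$ gives \eqref{eq3.11}, and letting $\tau\to 0$ (so that the surviving $g(t+\tau)$ collapse to $g(t)$) yields the claimed HJB equation \eqref{eq3.12}; the boundary conventions \eqref{eq3.7} automatically suppress the up/down terms at $i=1,|H|$, so no separate boundary argument is needed beyond the already-noted modification of the equation there.

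As a cross-check I would also package the transition terms into the single-agent controlled generator acting on $g$ viewed as a function on $S=H\times B$ with the density $x$ frozen, and read off \eqref{eq3.12} as the standard finite-horizon form $\dot g_{ij}+\max_u\{(\text{generator applied to }g)_{ij}-(\text{transition costs})\}+w_{ij}=0$. This makes transparent why the maximisation factors correctly: only the elective term $\lambda u_{ij\to ik}(g_{ik}-g_{ij}-f^B_{jk})$ depends on $u$, while the pressure and interaction terms are additive constants with respect to $u$ and pass outside the $\max$.

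The main obstacle, to my mind, is bookkeeping the maximisation rather than any analytic subtlety. The decision $u$ enters \eqref{eq3.10} twice -- through the explicit gain $\lambda u_{ij\to ik}(g_{ik}-f^B_{jk})$ and through the survival probability -- and one must verify that, upon collecting, the two occurrences merge into the single bracket $\lambda\max_u\{u_{ij\to ik}(g_{ik}-g_{ij}-f^B_{jk})\}$ while the uncontrolled terms factor cleanly out. Because mixed strategies are excluded, this maximisation is really a maximum over the finite set of admissible targets $k$ together with the no-switch option $u\equiv 0$, so the optimum is attained and the $\tau\to 0$ limit is unproblematic. The genuinely delicate points -- rigorously justifying the dynamic-programming principle itself, the assumed time-differentiability of $g$, and the measurable selection of an optimal $u$ -- I regard as standard and would defer to the optimal-control references cited after \eqref{eq3.11}, namely \cite{kamsc,ros}.
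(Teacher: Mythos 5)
Your proposal is correct and follows essentially the same route as the paper: the paper itself derives \eqref{eq3.12} by writing the one-step Bellman optimality relation \eqref{eq3.10} over an infinitesimal interval $[t,t+\tau]$, Taylor-expanding $g_{ij}(t+\tau)$ and discarding $O(\tau^{2})$ terms to reach \eqref{eq3.11}, and then letting $\tau\to 0$, deferring full rigour to the same references \cite{kamsc,ros}. Your additional remarks on merging the two occurrences of $u$ (gain term and survival probability) and on the boundary conventions \eqref{eq3.7} are accurate refinements of exactly that argument, not a departure from it.
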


As above, note that equations \eqref{eq3.12} hold only for the internal states $(i,j)$, $i\in H$, $j\in B$, such that $i\neq 1, |H|$. For the boundary states $(i,j)$ the terms involving transitions to $i-1$ or from $i+1$ respectively are omitted. Indicatively, for $i=1$ it is:

\begin{equation*}\label{eq3.13}
\dot{g}_{1j}=w_{1j}+\lambda\max_u\{ u_{1,j\to k} (g_{ik}-g_{1j}-f_{jk}^{B})\}+q_{1j}^{+} (g_{2,j}-g_{1,j})+\delta_{int}\sum_{k\in B}q_{1j}^{+k} x_{ik} (g_{2j}-g_{1j}).
\end{equation*}

We consider here the optimization problem of estimating the discounted optimal payoff (alternatively one can look for the average payoff in a long time horizon). Hence, assuming the discounting coefficient $\delta_{dis}$ for future payoffs, the evolutionary HJB equation for the discounted optimal payoff $e^{-\delta_{dis}\cdot t}\cdot g_{ij}(t)$ of an agent at state $(i,j)$, with any finite planning horizon $T$, can be written as:

\begin{align}
\begin{gathered}\label{eq3.14}
\dot g_{ij} + \lambda\cdot \max_u \{u_{ij\to ik}\cdot (g_{ik}-g_{ij}-f^B_{jk})\}+q^+_{ij} (g_{i+1,j}-g_{ij})+q^-_{ij} (g_{i-1,j}-g_{ij}-f_i^H)\\
+\sum_{k\in B}\delta_{int}\cdot x_{ik}\cdot \bigl(q^{+k}_{ij} (g_{i+1,j}-g_{ij})+q^{-k}_{ij} (g_{i-1,j}-g_{ij}-f_i^H) \bigr)+ w_{ij}=\delta_{dis}\cdot g_{ij}(t).
\end{gathered}
\end{align}

The basic mean-field game consistency problem states that, for some interval $[0,T]$, every agent will benefit from applying the same common control, that is, from adopting the same decision vector. In other words, the MFG consistency condition states that one needs to consider the kinetic equations \eqref{eq3.6} (i.e. the forward system), where the collective control is taken into account, and the evolutionary HJB equations \eqref{eq3.14} (i.e. the backward system), where individual controls are used, as a coupled  forward-backward system of equations on a given time horizon $[0,T]$, complemented by some initial condition $x^0$ for the occupation density vector $x$, and some terminal condition $g_{T}$ for the optimal payoff $g$, such that $x,g$ and the common $u$ solve the aforesaid system. Our aim here is first to identify the solution of the stationary consistency problem, and then to investigate the general time-dependent problem, extending (if possible) our findings for the stationary problem. As mentioned, we shall work in three asymptotic regimes; fast execution of the agents' personal decisions, weak binary interactions, and small payoff discounting in time. 

\section{Stationary Problem}\label{sec3.3}

The stationary MFG consistency problem consists of the stationary HJB equation for the discounted optimal payoff $e^{-\delta_{dis}\cdot t}\cdot g_{ij}$ of an agent at state $(i,j)$, with a finite time horizon:

\begin{align}
\begin{gathered}\label{eq3.15}
w_{ij}+ \lambda\cdot \max_u u_{i,j\to k}\cdot (g_{ik}-g_{ij}-f^B_{jk})+q^+_{ij}\cdot (g_{i+1,j}-g_{ij})+q^-_{ij}\cdot (g_{i-1,j}-g_{ij}-f_i^H)\\
+\delta_{int}\cdot\sum_{k\in B}x_{ik}\cdot \bigl(q^{+k}_{ij}\cdot (g_{i+1,j}-g_{ij})+q^{-k}_{ij}\cdot (g_{i-1,j}-g_{ij}-f_i^H)\bigr)
=\delta_{dis}\cdot g_{ij},
\end{gathered}
\end{align}

\noindent where the evolution given by \eqref{eq3.6} is replaced with the corresponding fixed point condition:

\begin{align}
\begin{gathered}\label{eq3.16}
\lambda\cdot \sum_{k\neq j} (u_{i,k\to j}\cdot x_{ik} -u_{i,j\to k}\cdot x_{ij})
+q^-_{i+1,j}\cdot x_{i+1,j}+q^+_{i-1,j}\cdot x_{i-1,j}-(q^+_{i,j}+q^-_{i,j})\cdot x_{ij}\\
+\delta_{int}\cdot\sum_{k\in B}q^{+k}_{i-1,j}\cdot x_{i-1,k}\cdot x_{i-1,j}+ q^{-k}_{i+1,j}\cdot x_{i+1,k}\cdot x_{i+1,j}-(q^{+k}_{ij}+q_{ij}^{-k})\cdot x_{ik}\cdot x_{ij}=0.
\end{gathered}
\end{align}

By analogy with the time-dependent problem, for the stationary MFG consistency problem one needs to consider \eqref{eq3.15},\eqref{eq3.16} as a coupled stationary system. In the asymptotic limit of fast execution of individual decisions, $\lambda\to\infty$, the terms in \eqref{eq3.15},\eqref{eq3.16} containing the transition rates $\lambda$ should obviously vanish (otherwise they would `explode' to infinity). For a practical interpretation of this observation, one can think that if the execution of personal decisions is significantly fast, then in a stationary state no agent should be interested in switching strategy. In this case \eqref{eq3.15},\eqref{eq3.16} turn respectively into the form:

\begin{align}
\begin{gathered}\label{eq3.17}
w_{ij}+q^+_{ij}\cdot (g_{i+1,j}-g_{ij})+q^-_{ij}\cdot (g_{i-1,j}-g_{ij}-f_i^H)\\
+\delta_{int}\cdot\sum_{k\in B}x_{ik}\cdot \bigl(q^{+k}_{ij}\cdot (g_{i+1,j}-g_{ij})+q^{-k}_{ij}\cdot (g_{i-1,j}-g_{ij}-f_i^H)\bigr)
=\delta_{dis}\cdot g_{ij},
\end{gathered}
\end{align}

\noindent and,

\begin{align}
\begin{gathered}\label{eq3.18}
q^-_{i+1,j}\cdot x_{i+1,j}+q^+_{i-1,j}\cdot x_{i-1,j}-(q^+_{i,j}+q^-_{i,j})\cdot x_{ij}\\+\delta_{int}\cdot\sum_{k\in B}q^{+k}_{i-1,j}\cdot x_{i-1,k}\cdot x_{i-1,j}+ q^{-k}_{i+1,j}\cdot x_{i+1,k}\cdot x_{i+1,j}-(q^{+k}_{ij}+q_{ij}^{-k})\cdot  x_{ik}\cdot x_{ij}=0,
\end{gathered}
\end{align}

\noindent supplemented by the  consistency condition:

\begin{equation}\label{eq3.19}
g_{ik}-g_{ij}-f_{jk}^{B}\leq 0,
\end{equation}

\noindent for all $i\in H$, $j,k\in B$, such that $x_{ij}\neq 0$. In fact, the consistency condition \eqref{eq3.19} ensures that all terms in \eqref{eq3.15} and \eqref{eq3.16} including elements of the decision matrix  indeed vanish in \eqref{eq3.17} and \eqref{eq3.18} respectively, for all the occupied states. 

Introducing further the auxiliary notation $\tilde{w}_{ij}=w_{ij}-q_{ij}^{-}\cdot f_{i}^{H}$, \eqref{eq3.17} and \eqref{eq3.18} are written respectively in the form:

\begin{equation}\label{eq3.20}
(-A^{T}_{j}+\delta_{dis}-\delta_{int}\cdot E_{j}^{T}(x))\cdot g_{ij}=\tilde{w}_{ij}-\delta_{int}\cdot f_{i}^{H}\cdot \sum_{k\in B}q_{ij}^{-k}\cdot x_{ik},
\end{equation}

\noindent and,

\begin{equation}\label{eq3.21}
(A_{j}+\delta_{int}\cdot E_{j}(x))\cdot x_{ij}=0,
\end{equation}

\noindent where the matrices $A_{j}$, with the transpose matrix $A_{j}^{T}$, and $E_{j}(x)$, with the transpose matrix $E^{T}_{j}(x)$, are given respectively by:

\begin{equation}\label{eq3.22}
A_{j}=\begin{pmatrix}
-q_{1j}^{+} & q_{2j}^{-} & 0 & \dots \\
q_{1j}^{+} & -q_{2j}^{+}-q_{2j}^{-} & q_{3j}^{-} & \dots \\
\dots & \dots & \dots & \dots\\
\dots & q_{n-2,j}^{+} & -q_{n-1,j}^{+}-q_{n-1,j}^{-} & q_{nj}^{-}\\
\dots & 0 & q_{n-1,j}^{+} & -q_{nj}^{-}
\end{pmatrix},
\end{equation}

\noindent and,

\begin{equation}\label{eq3.23}
E_{j}=
\begingroup 
\setlength\arraycolsep{1.5pt}
\begin{pmatrix}
-\sum\limits_{k}q_{1j}^{+k} x_{1k} \hspace{-14mm}& \sum\limits_{k}q_{2j}^{-k} x_{2k} & 0 & \dots \\
\sum\limits_{k}q_{1j}^{+k} x_{1k} & -\sum\limits_{k}(q_{2j}^{+k}+q_{2j}^{-k}) x_{2k} & \sum\limits_{k}q_{3j}^{-k} x_{3k} & \dots \\
\dots & \dots & \dots & \dots\\
\dots & \sum\limits_{k}q_{n-2,j}^{+k} x_{n-2,k} & -\sum\limits_{k}(q_{n-1,j}^{+k}+q_{n-1,j}^{-k}) x_{n-1,k} & \sum\limits_{k}q_{nj}^{-k} x_{nk}\\
\dots & 0 & \sum\limits_{k}q_{n-1,j}^{+k} x_{n-1,k} & -\sum\limits_{k}q_{nj}^{-k} x_{nk}
\end{pmatrix}.
\endgroup
\end{equation}

We shall look further for the asymptotic regime with small rates $\delta_{int}\cdot q_{ij}^{\pm k}$. Therefore, starting with \eqref{eq3.21} we are looking for stationary solutions of the form:

\begin{equation}\label{eq3.24}
x_{ij}=x_{ij}^{0}+\delta_{int}\cdot x_{ij}^{1}+O(\delta_{int}^{2}).
\end{equation}

Substituting \eqref{eq3.24} into \eqref{eq3.21}, and equating terms of the same order in $\delta_{int}^{0}$, $\delta_{int}^{1}$, we obtain respectively the following equations:

\begin{equation}\label{eq3.25}
A_{j}\cdot x_{ij}^{0}=0,
\end{equation}
\begin{equation}\label{eq3.26}
A_{j}\cdot x_{ij}^{1}+E_{j}^{0}\cdot x_{ij}^{0}=0,
\end{equation}

\noindent where the notation $E^{0}_{j}$ corresponds to the matrix $E_{j}$ containing only elements of order $O(\delta_{int}^{0})$ (we use respectively the notation $E^{0T}_{j}$ for the transpose matrix).

\begin{assumption}\label{as3.2}
Let the detailed balance condition \eqref{eq3.8} hold with all $q_{ij}^{+}$ (or $q_{ij}^{-}$) being strictly positive. We shall use the shorter notation, for $i\in H : i\neq n$, $j\in B$:

\begin{equation}\label{eq3.27}
q_{ij}=q_{ij}^{+}=q_{i+1,j}^{-}.
\end{equation}
\end{assumption}

In the linear approximation of vanishing $\delta_{int}$, we end up with an uncoupled system. Since different elements of $B$ are also uncoupled, then \eqref{eq3.25},\eqref{eq3.26} can be solved separately for any $j\in B$. Looking at the zero order of small evolution transition rates, by \eqref{eq3.25}, we have the following result:

\begin{proposition}\label{prop3.7}
Let Assumption \ref{as3.2} hold. Then, the rank of $A_j$ is exactly $n-1$, while the kernel of $A_{j}$ is generated by the following vector:

\begin{align}
\begin{gathered}\label{eq3.28}
x^{0}_{2j}=\frac{q_{1j}^+}{q_{2j}^-}\cdot x^{0}_{1j}, \quad x^{0}_{3j}=\frac{q_{2j}^+}{q_{3j}^-}\cdot\frac{q_{1j}^+}{q_{2j}^-} \cdot x^{0}_{1j}, \quad \dots ,
\quad x^{0}_{nj}= \prod_{l=1}^{n-1} \frac{q_{lj}^+}{q^-_{l+1,j}}\cdot x^{0}_{1j}\\
x^{0}_{1j}=\left( 1+\frac{q_{1j}^+}{q_{2j}^-} +\frac{q_{2j}^+}{q_{3j}^-}\cdot \frac{q_{1j}^+}{q_{2j}^-} + \cdots
+ \prod_{l=1}^{n-1} \frac{q_{lj}^+}{q^-_{l+1,j}}\right)^{-1}\cdot x^{0}_j,
\end{gathered}
\end{align}

\noindent where we have introduced the notation $x^{0}_j=\sum_i x^{0}_{ij}$. Specifically, under the detailed balance condition $A_{j}$ is symmetric, and its kernel generated by \eqref{eq3.28} is proportional to the uniform distribution, $x^0_{ij}=x^0_j/n$ for all $i\in H$, $j\in B$, that is, $Ker(A_{j})$ is generated by $(1,\dots,1)$.
\end{proposition}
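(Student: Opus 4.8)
The plan is to treat $A_j$ as what it structurally is: the (transpose of the) infinitesimal generator of an irreducible continuous-time birth-death chain on the hierarchy $H=\{1,\dots,n\}$, with up-rates $q^+_{ij}$ and down-rates $q^-_{ij}$, subject to the reflecting boundary conditions \eqref{eq3.7}. Because $A_j$ is tridiagonal with columns summing to zero, both the rank statement and the explicit kernel can be extracted in a single pass by solving the homogeneous system $A_j x^0=0$ directly through a forward recursion, rather than by invoking abstract Perron--Frobenius theory. Throughout I abbreviate $x_i:=x^0_{ij}$ for fixed $j$.

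First I would read off the equations row by row. The top row gives the level-$1$ flux balance $q^+_{1j}x_1=q^-_{2j}x_2$. I then claim, by induction on $i$, that every interior row reproduces a local flux balance $q^+_{ij}x_i=q^-_{i+1,j}x_{i+1}$ for $i=1,\dots,n-1$: assuming $q^+_{i-1,j}x_{i-1}=q^-_{ij}x_i$ and substituting it into the $i$-th row $q^+_{i-1,j}x_{i-1}-(q^+_{ij}+q^-_{ij})x_i+q^-_{i+1,j}x_{i+1}=0$ cancels the $q^-_{ij}x_i$ contributions and leaves exactly $q^+_{ij}x_i=q^-_{i+1,j}x_{i+1}$. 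Under Assumption \ref{as3.2} the down-rates $q^-_{i+1,j}=q^+_{ij}$ are strictly positive for $i=1,\dots,n-1$, so each relation determines $x_{i+1}$ uniquely from $x_i$; hence every solution is a scalar multiple of the vector obtained by fixing $x_1$. The decisive point, which is where the rank claim is really earned, is that the last row $q^+_{n-1,j}x_{n-1}-q^-_{nj}x_n=0$ is precisely the $i=n-1$ flux balance already produced by the recursion, hence redundant; consequently the kernel is exactly one-dimensional and $\mathrm{rank}(A_j)=n-1$. Telescoping $x_{i+1}=(q^+_{ij}/q^-_{i+1,j})\,x_i$ yields $x^0_{ij}=\big(\prod_{l=1}^{i-1}q^+_{lj}/q^-_{l+1,j}\big)x^0_{1j}$, which is the family of identities in \eqref{eq3.28}, and fixing the free constant by $\sum_i x^0_{ij}=x^0_j$ produces the stated closed form for $x^0_{1j}$.

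For the final assertion I would substitute the detailed balance condition \eqref{eq3.8}, namely $q^+_{lj}=q^-_{l+1,j}$, into this product: every ratio collapses to $1$, so $x^0_{ij}=x^0_{1j}$ for all $i$, the kernel is spanned by $(1,\dots,1)$, and $x^0_{ij}=x^0_j/n$. Symmetry of $A_j$ follows from the same condition by comparing the super- and sub-diagonal entries $A_j[i,i+1]=q^-_{i+1,j}$ and $A_j[i+1,i]=q^+_{ij}$, which coincide under \eqref{eq3.8}; as a consistency check, for a symmetric generator the constant vector lies in the kernel because the rows sum to zero, matching the uniform law just found. I expect the only genuinely delicate point to be the rank argument, and specifically the observation that the final equation carries no new information — everything else is a determined linear recursion whose invertibility rests solely on the strict positivity of the down-rates granted by Assumption \ref{as3.2}, after which the detailed-balance specialization and the symmetry claim are immediate substitutions.
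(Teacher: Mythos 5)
Your proof is correct and takes essentially the same route as the paper's: both reduce $A_j x^0=0$ by sequential elimination to the chain of flux balances $q^+_{ij}x^0_{ij}=q^-_{i+1,j}x^0_{i+1,j}$ (the paper discards the last equation and adds rows sequentially, you substitute forward by induction), then telescope, normalize, and specialize to detailed balance to get the uniform kernel. Your version is, if anything, slightly more explicit about why the rank is exactly $n-1$ (exhibiting the last row as the redundant flux balance and invoking strict positivity of the rates) and about verifying the symmetry of $A_j$, but the substance matches the paper's argument.
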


\begin{proof}
Notice that system \eqref{eq3.25} is degenerate, as expected, since we are looking for non-negative solutions satisfying $\sum_{j}(x^{0}_{1j}+\dots+x^{0}_{nj})=1$. Thus, one of the $n$ equations of \eqref{eq3.25} can be discarded, say for example the last one. Rewriting the system of the remaining $(n-1)$ equations by using the first equation, and then adding sequentially to each of the next $(n-2)$ equations their previous one, one eventually obtains the following system:

\begin{align}\label{eq3.29}
\begin{gathered}
 q^+_{1j}\cdot x^{0}_{1j}-q^-_{2j}\cdot x^{0}_{2j} =0 \\
 q^+_{2j}\cdot x^{0}_{2j}-q^-_{3j}\cdot x^{0}_{3j} =0 \\
  ... \\
 q^+_{n-1,j}\cdot x^{0}_{n-1,j}-q^-_{nj}\cdot x^{0}_{nj} =0 .
\end{gathered}
\end{align}

This has an obvious solution, that is unique up to a multiplier, and is given by $\eqref{eq3.28}$. Alternatively, starting the exclusion from the last equation of \eqref{eq3.29}, the solution to \eqref{eq3.25}~is:

\begin{align}
\begin{gathered}\label{eq3.30}
x^0_{n-1,j}=\frac{q_{nj}^-}{q_{n-1,j}^+}\cdot x^{0}_{nj}, \enskip x^0_{n-2,j}=\frac{q_{n-1,j}^-}{q_{n-2,j}^+}\cdot \frac{q_{nj}^-}{q_{n-1,j}^+}\cdot x^0_{nj}, \enskip \dots ,
\enskip x^0_{1j}= \prod_{l=1}^{n-1} \frac{q_{l+1,j}^-}{q^+_{l,j}}\cdot x^0_{nj}\\
x^0_{nj}=\left( 1+ \frac{q_{nj}^-}{q_{n-1,j}^+} +\frac{q_{n-1,j}^-}{q_{n-2,j}^+}\cdot \frac{q_{nj}^-}{q_{n-1,j}^+} + \dots
+ \prod_{l=1}^{n-1} \frac{q_{l+1,j}^-}{q^+_{l,j}} \right)^{-1} \cdot x^0_j.
\end{gathered}
\end{align}

Given now the detailed balance condition \eqref{eq3.8}, and the non-degeneracy~established by Assumption \ref{as3.2}, one observes from \eqref{eq3.28}, or \eqref{eq3.30}, that for every strategy $j\in B$ we have: 

$$x^{0}_{1j}=x^{0}_{2j}=\dots=x^{0}_{nj}=x^{0}_{j}/n.$$
\end{proof}

We have shown that in the main order of small evolution rates $\delta_{int}\cdot q_{ij}^{\pm \kappa}$, $x_{ij}^{0*}=x_{j}^{0*}/n$ is a fixed point of the evolution \eqref{eq3.6}, along with the common control $u^{com}=(u_{ij\to i\kappa}=0)$, $\forall i\in H$, $\forall j,\kappa\in B$, that is consistent with condition \eqref{eq3.19}, and expresses the instantaneous execution of personal decisions. This will also be a stable solution of the stationary system, if $x_{ij}^{0*}=x_{j}^{0*}/n$ is a stable fixed point of \eqref{eq3.6}, for $u^{com}=(u_{ij\to i\kappa}=0)$, $\forall i\in H, j,\kappa\in B$.

\begin{assumption}\label{as3.3}
For technical (computational) purposes only, let the hierarchy and the strategy set be of the same size, i.e. $|H|=|B| \Rightarrow n=m$.
\end{assumption}

To conduct the stability analysis, in the asymptotic regimes of large $\lambda$ and small $\delta_{int}$, let us introduce the auxiliary variables:

\begin{equation}\label{eq3.31}
y_{\kappa}=x_{ij}^{0}-x_{ij}^{0*},
\end{equation}

\noindent where $\kappa=i+(j-1)\cdot n$, such that $\kappa\in K=\{1,\dots,n^{2}-1\}$. 

Using the above variables, we transform system \eqref{eq3.6} into the non-degenerate linear homogeneous system:

\begin{equation}\label{eq3.32}
\dot{y}=\Lambda\cdot y,
\end{equation}

\noindent where $\Lambda$ is the block matrix:

\begin{equation}\label{eq3.33}
\Lambda=\begin{pmatrix}
A_{1} & 0 & \dots  & \dots  & \dots  \\
0 & A_{2} & 0 & \dots & \dots  \\
\dots & 0 & A_{j} & 0 & \dots \\
\dots & \dots & 0 & A_{n-1} & 0\\
\Delta & \dots & \dots & \Delta & D
\end{pmatrix}.
\end{equation}

Each matrix $\Delta$ has the same non zero entries $-q_{nn}^{-}$ on its bottom row, while the rest of its elements are equal to zero. Note, as well, that the $A_{j}$ matrices are of dimension $n\times n$, and each zero matrix to the right of an $A_{j}$ matrix is of dimension $n\times n\cdot (n-j)-1$. The $(n-1)\times (n-1)$ matrix $D$ is given by:

\begin{equation}\label{eq3.34}
D=\begin{pmatrix}
-q_{1n}^{+} & q_{2n}^{-} & 0  & \dots  \\
q_{1n}^{+} & -q_{2n}^{+}-q_{2n}^{-} & q_{3n}^{-} & \dots  \\
\dots & \dots & \dots & \dots \\ 
\dots & q_{n-3,n}^{+} & -q_{n-2,n}^{+}-q_{n-2,n}^{-} & q_{n-1,n}^{-}\\
\dots & 0 & q_{n-2,n}^{+}-q_{nn}^{-} & -q_{n-1,n}^{+}-q_{n-1,n}^{-}-q_{nn}^{-}
\end{pmatrix}.
\end{equation}

Applying sequentially (starting with $C_{1}\equiv A_{1}$, setting in the next step $C_{1}\equiv A_{2}$ etc.) the following block matrix formula:

\[
\det
\left( {\begin{array}{cc}
C_{1} & 0 \\
C_{2} & C_{3}
\end{array} } \right)=\det C_{1}\cdot \det C_{3},
\]

\noindent where $C_{1}$, $C_{2}$, and $C_{3}$ are $n \times n$, $m \times n$, and $m \times m$ matrices respectively, the determinant of $\Lambda$ is given by:

\begin{equation}\label{3.35}
\det\Lambda= \det (A_{1})\cdot \det(A_{2})\cdots\det(A_{n-1})\cdot \det D.
\end{equation}

We further apply sequentially $n-1$ times the elementary row operation of row addition on every $n \times n$ matrix $A_{j}$, starting with row $n$ and adding in each step row $i$ to row $i-1$. 

Eventually, we transform $A_{j}$ into a lower triangular matrix of the form:

\begin{equation}\label{eq3.36}
\begin{pmatrix}
0 & 0 & 0 & \dots \\
q_{1j}^{+} & -q_{2j}^{-} & 0 & \dots \\
\dots & \dots & \dots & \dots\\
\dots & q_{n-2,j}^{+} & -q_{n-1,j}^{-} & 0\\
\dots & 0 & q_{n-1,j}^{+} & -q_{nj}^{-}
\end{pmatrix},
\end{equation}

\noindent with a single zero eigenvalue, and $n-1$ negative eigenvalues $-q_{ij}^{-}$, for $i=2,\dots,n$. Note that since $A_{j}$ are symmetric matrices (due to the detailed balance condition), the algebraic multiplicity of each of their eigenvalues is equal to the geometric multiplicity. 

Regarding the $(n-1)\times (n-1)$ matrix $D$, and bearing in mind the detailed balance condition, we apply once the elementary row operation of adding row $n-1$ to row $n-2$, and then, we apply sequentially $n-2$ times the elementary column operation of adding column $i$ to column $i+1$, starting with column $1$, to eventually transform $D$ into the following lower triangular form:

\begin{equation}\label{eq3.37}
\begin{pmatrix}
-q_{1n}^{+} & 0 & 0  & \dots  \\
q_{1n}^{+} & -q_{2n}^{+} & 0 & \dots  \\
\dots & \dots & \dots & \dots \\ 
\dots & q_{n-3,n}^{+} & -q_{n,n}^{-} & 0\\
\dots & 0 & (q_{n-2,n}^{+}-q_{n,n}^{-}) & -q_{n-1,n}^{-}
\end{pmatrix},
\end{equation}

\noindent with $n-1$ negative eigenvalues $-q_{in}^{+}$, for $i=1,\dots,n-1$. In total, we find that matrix $\Lambda$ has one zero eigenvalue of algebraic multiplicity $n-1$, and $n\cdot (n-1)$ negative eigenvalues. Now it is trivial to transform $\Lambda$ into a block diagonal matrix, subtracting sequentially from each column $i$, $\forall i=\{1,\dots,n\cdot n-n\}$, each column $j$, $\forall j=\{n\cdot n-n+2,\dots,n\cdot n -1\}$. For a block diagonal matrix, both the algebraic and the geometric multiplicity of an eigenvalue is given by adding the multiplicities from each block. Then, for the block matrix $\Lambda$ the algebraic multiplicity of the zero eigenvalue is equal to its geometric multiplicity. 

We, thus, have the following result:

\begin{lemma}\label{lem3.3}
Let the Assumptions \ref{as3.2}, \ref{as3.3} hold. Consider the linear system $\dot{y}=\Lambda\cdot y$ as defined above. The solution to this system, that is the vector $x_{ij}^{0*}=x_{j}^{0*}/n$ given by Proposition \ref{prop3.7}, is stable (but not asymptotically stable) since $\Lambda$ has $n\cdot (n-1)$ negative eigenvalues, and a single zero eigenvalue whose algebraic multiplicity equals to its geometric multiplicity.
\end{lemma}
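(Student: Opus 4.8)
The plan is to invoke the standard spectral criterion for the linear autonomous system $\dot y=\Lambda y$: the origin is Lyapunov stable precisely when every eigenvalue of $\Lambda$ has non-positive real part and every eigenvalue on the imaginary axis is semisimple (its algebraic and geometric multiplicities coincide, so that no non-trivial Jordan block is attached to it), and it is asymptotically stable only if all eigenvalues have strictly negative real part. Since the statement already registers a genuine zero eigenvalue, asymptotic stability is excluded at once, and the task reduces to two steps: (i) locating the spectrum of $\Lambda$, and (ii) verifying that the zero eigenvalue is semisimple.

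For step (i) I would exploit the block lower-triangular shape of $\Lambda$ in \eqref{eq3.33}. Because $\Lambda-\mu I$ retains this shape (the coupling blocks $\Delta$ sit strictly below the diagonal and are untouched by the shift), its determinant factorises as $\prod_{j=1}^{n-1}\det(A_j-\mu I)\cdot\det(D-\mu I)$, the characteristic-polynomial analogue of \eqref{3.35}. The triangularisations already carried out in \eqref{eq3.36} and \eqref{eq3.37} then read off the diagonal blocks' spectra directly: under Assumption \ref{as3.2} each $A_j$ contributes a single eigenvalue $0$ together with the strictly negative values $-q_{ij}^-$, $i=2,\dots,n$, while $D$ contributes the strictly negative values $-q_{in}^+$, $i=1,\dots,n-1$. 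Summing over the $n-1$ blocks $A_j$ and the block $D$ yields algebraic multiplicity $n-1$ for the eigenvalue $0$ and $n(n-1)$ strictly negative eigenvalues, accounting for all $n^2-1$ coordinates.

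The crux is step (ii): confirming that the $\Delta$-coupling in the last block-row does not spawn a generalised eigenvector at the origin. Rather than rely on the column reductions, I would compute $\ker\Lambda$ directly. Writing a candidate null vector as $(v_1,\dots,v_{n-1},w)$ with $v_j\in\mathbb{R}^{n}$ and $w\in\mathbb{R}^{n-1}$, the equation $\Lambda v=0$ splits into $A_j v_j=0$ for each $j$ and $\sum_j \Delta v_j + D w=0$. By Proposition \ref{prop3.7} each $\ker A_j$ is one-dimensional, spanned by $(1,\dots,1)$, so $v_j=c_j(1,\dots,1)$ for free scalars $c_1,\dots,c_{n-1}$; and because $D$ has only strictly negative eigenvalues it is invertible, so $w=-D^{-1}\sum_j \Delta v_j$ is uniquely slaved to the $c_j$. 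Hence $\dim\ker\Lambda=n-1$, matching the algebraic multiplicity, so the zero eigenvalue is semisimple and the origin is stable but not asymptotically stable, as claimed. I expect this slaving argument to be the delicate point, since it is precisely the invertibility of $D$ (guaranteed by Assumption \ref{as3.2}) that prevents the off-diagonal coupling from producing a Jordan block and thereby destroying neutral stability; Assumption \ref{as3.3} enters only to fix the block sizes so that this bookkeeping is exact.
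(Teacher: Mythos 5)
Your proposal follows the same overall skeleton as the paper --- exploit the block lower-triangular form \eqref{eq3.33}, obtain the spectrum block by block, count $n-1$ zero and $n(n-1)$ negative eigenvalues, then verify that the zero eigenvalue is semisimple --- but the two delicate steps are executed differently, and in both cases your version is the sharper one. First, the paper factorizes only $\det\Lambda$ via the block formula \eqref{3.35}; you factorize the full characteristic polynomial, $\det(\Lambda-\mu I)=\prod_{j}\det(A_j-\mu I)\cdot\det(D-\mu I)$, which is what one actually needs in order to say that the spectrum of $\Lambda$ is the union of the block spectra. Second, and more importantly, for semisimplicity the paper transforms $\Lambda$ into a block-diagonal matrix by elementary column operations and then adds multiplicities blockwise; this is the weakest point of the paper's argument, since elementary row/column operations do not preserve eigenvalues or Jordan structure. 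Your direct computation of $\ker\Lambda$ --- $v_j=c_j(1,\dots,1)$ from Proposition \ref{prop3.7}, and $w=-D^{-1}\sum_j\Delta v_j$ uniquely slaved because $D$ is invertible --- is airtight, gives $\dim\ker\Lambda=n-1$ exactly, and pinpoints why the $\Delta$-coupling cannot create a Jordan block at the origin. One caveat applies equally to you and to the paper: the spectra of the diagonal blocks cannot literally be ``read off'' the triangularized forms \eqref{eq3.36} and \eqref{eq3.37}, because row/column additions preserve determinant and rank but not eigenvalues. For $A_j$ this is repaired along the lines the paper itself hints at: detailed balance makes $A_j$ symmetric, hence diagonalizable with real spectrum; Proposition \ref{prop3.7} gives rank $n-1$, so the zero eigenvalue is simple; and non-positivity of the spectrum (column sums zero, non-negative off-diagonal entries, Gershgorin) forces the remaining $n-1$ eigenvalues to be strictly negative. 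For the block $D$, which is \emph{not} symmetric, strict negativity of its spectrum still requires a separate argument --- your kernel step only needs invertibility of $D$, which does follow legitimately from \eqref{eq3.37} since determinants are preserved, but the stability conclusion also needs $\mathrm{Re}\,\mu<0$ for every eigenvalue $\mu$ of $D$. That residual gap is inherited from the paper, not introduced by you.
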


The third asymptotic regime we shall look at is that of small discounting $\delta_{dis}$. Obviously, no payoff discounting terms appear in the stationary kinetic equations \eqref{eq3.18}. Moving to the stationary HJB equation \eqref{eq3.17}, or \eqref{eq3.20}, initially we are looking for solutions of the form:

\begin{equation}\label{eq3.38}
g_{ij}=g_{ij}^{0}+\delta_{dis}\cdot g_{ij}^{1}.
\end{equation}

Substituting \eqref{eq3.38} into \eqref{eq3.20}, and equating terms of zero order in $\delta_{int}$ and $\delta_{dis}$, we get the equation:

\begin{equation}\label{eq3.39}
-A^{T}_{j}\cdot g^{0}_{ij}=\tilde{w}_{ij},
\end{equation}

In general, equation \eqref{eq3.39} has no (non-degenerate) solution, since (by Proposition \ref{prop3.7}) the kernel of the symmetric matrix $A_{j}^{T}=A_{j}$ is one dimensional, implying that the image of the transpose matrix $A_{j}^{T}$ is $(n-1)$ dimensional (by the rank-nullity theorem). More precisely, equation \eqref{eq3.39} has no solution if:

\begin{equation}\label{eq3.40}
(\tilde{w}_{ij},x^{0}_{ij})=\frac{x^{0}_{j}}{n}\cdot \sum_{i}\tilde{w}_{ij}\neq 0.
\end{equation}

Thus, to remain in the non-degenerate regime, we need to introduce additionally the following assumption;

\begin{assumption}\label{as3.4}
For every strategy $j\in B$ the following is true; $\sum\limits_{i}\tilde{w}_{ij}\neq 0$.
\end{assumption}

As a result, we are looking next for solutions of \eqref{eq3.20} in the form of the expansion:

\begin{equation}\label{eq3.41}
g_{ij}=g_{ij}^{0}/\delta_{dis}+g_{ij}^{1}+g_{ij}^{2}\cdot \delta_{dis}.
\end{equation}

Recall that we are looking at the asymptotic regime with small $\delta_{int}$ (weak binary interaction), and small $\delta_{dis}$ (small discounting). One needs to distinguish clear assumptions on the relation between the small parameters $\delta_{int}$ and $\delta_{dis}$, for a full perturbation analysis. In principle, three basic regimes can be naturally identified:

\begin{itemize}[leftmargin=0.5cm]
\item[] ID$_{1}$: Interaction is relatively very small, i.e. $\delta_{dis}=\delta$ and $\delta_{int}=\delta^{2}$.
\item[] ID$_{2}$: Interaction and Discounting are small effects of comparable order, i.e. $\delta_{dis}=\delta_{int}=\delta$.
\item[] ID$_{3}$: Discounting is relatively very small, i.e. $\delta_{int}=\delta$ and $\delta_{dis}=\delta^{2}$.
\end{itemize}

We initially concentrate on the ID$_{1}$ regime. Substituting \eqref{eq3.41} into \eqref{eq3.20}, and equating terms of order $\delta^{-1}$, $\delta^{0}$, $\delta^{1}$, we find respectively the following equations:

\begin{align}
\begin{gathered}\label{eq3.42}
A_{j}^{T}\cdot g_{ij}^{0}=0
\\
-A_{j}^{T}\cdot g_{ij}^{1}+g_{ij}^{0}=\tilde{w}_{ij}
\\
-A_{j}^{T}\cdot g_{ij}^{2}+g_{ij}^{1}-E_{j}^{0T}\cdot g_{ij}^{0}=0.
\end{gathered}
\end{align}

The first equation in \eqref{eq3.42} tells us that $g_{ij}^{0}$ belongs to the kernel of $A_{j}$ (since $A_{j}=A_{j}^{T}$), that is, for arbitrary constants $a_{j}\in \mathbb{R}$, we get:

\begin{equation}\label{eq3.43}
g_{ij}^{0}=a_{j}\cdot x_{ij}^{0}.
\end{equation}

The second equation in \eqref{eq3.42} tells us that $\tilde{w}_{ij}-g_{ij}^{0}$ belongs to the image of $A_{j}$, which coincides with the orthogonal compliment to $Ker(A_{j})$, given the identity:

\begin{equation*}
Im(A_{j})=Ker^{\perp}(A_{j}^{T}).
\end{equation*}

\noindent Besides, from Proposition \ref{prop3.7} we find that the orthogonal compliment to $Ker(A_{j})$ is:

\begin{equation}\label{eq3.44}
Ker^{\perp}(A_{j})=\{x:\sum_{i}x_{ij}=0\}.
\end{equation}

\noindent In this case, the fact that $\tilde{w}_{ij}-g_{ij}^{0}\in Im(A_{j})$ further implies that:

\begin{equation}\label{eq3.45}
\sum_{i}\tilde{w}_{ij}=\sum_{i}g_{ij}^{0}\Rightarrow\dots\Rightarrow g_{ij}^{0}=\sum_{i}\tilde{w}_{ij}/n.
\end{equation}

Looking at the third equation in \eqref{eq3.42}, and noting that $E^{0T} g_{ij}^{0}=0$ for a uniform $g_{\cdot j}^{0}$, we conclude that $g_{ij}^{1}\in Im(A_{j})$ as well, that is, $g_{ij}^{1}\in Ker^{\perp}(A_{j})$. Thus, to identify $g_{ij}^{1}$ we need to invert $A_{j}$ on the reduced $(n-1)$ dimension of $Ker^{\perp}(A_{j})$.

\begin{lemma}\label{le3.4}
Let Assumption \ref{as3.2} hold, and let $y\in Ker^{\perp}(A_{j})$. Then all solutions $z$ to the matrix equation $A_{j}\cdot z=y$ are given by the formula:

\begin{equation}\label{eq3.46}
z_{ij}=z_{1j}-\sum_{a=1}^{i-1}\Bigl(\sum_{\beta=1}^{a}\frac{y_{\beta j}}{q_{aj}}\Bigr),
\end{equation}

\noindent $\forall i\neq 1$, with arbitrary $z_{1j}$. There exists a unique solution $z_{\cdot j}\in Ker^{\perp}(A_{j})$ specified by:

\begin{equation}\label{eq3.47}
z_{1j}=\sum_{a=1}^{n-1}\Bigl(\frac{n-a}{n}\cdot\sum_{\beta=1}^{a}\frac{y_{\beta j}}{q_{aj}}\Bigr).
\end{equation}
\end{lemma}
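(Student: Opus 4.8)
The plan is to exploit the explicit tridiagonal (birth--death generator) structure of $A_j$ and reduce the linear system $A_j\cdot z=y$ to a first-order recursion in the consecutive differences $z_{i+1,j}-z_{ij}$. First I would record the solvability picture: by Assumption \ref{as3.2} and Proposition \ref{prop3.7} the matrix $A_j$ is symmetric of rank $n-1$, with $Ker(A_j)$ spanned by $(1,\dots,1)$, so $Im(A_j)=Ker^{\perp}(A_j^{T})=Ker^{\perp}(A_j)=\{\,x:\sum_i x_{ij}=0\,\}$ by \eqref{eq3.44}. Hence for $y\in Ker^{\perp}(A_j)$ the equation is consistent, and its solution set is a one-parameter affine family, the free parameter being the value $z_{1j}$, i.e. the component along $Ker(A_j)$.

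To obtain the explicit form I would write out the rows of \eqref{eq3.22} in the detailed-balance notation \eqref{eq3.27}. The first row reads $q_{1j}(z_{2j}-z_{1j})=y_{1j}$, each internal row $i$ reads $-q_{i-1,j}(z_{ij}-z_{i-1,j})+q_{ij}(z_{i+1,j}-z_{ij})=y_{ij}$, and the last row reads $-q_{n-1,j}(z_{nj}-z_{n-1,j})=y_{nj}$. Setting $d_{aj}=z_{a+1,j}-z_{aj}$, the first and internal rows telescope: adding them successively gives $q_{ij}\,d_{ij}=\sum_{\beta=1}^{i}y_{\beta j}$ for $1\le i\le n-1$, and since the $q_{aj}$ are strictly positive this inverts to $d_{ij}=(1/q_{ij})\sum_{\beta=1}^{i}y_{\beta j}$. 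The terminal row then imposes $q_{n-1,j}\,d_{n-1,j}=-y_{nj}$, which agrees with the telescoped value precisely because $\sum_{\beta=1}^{n}y_{\beta j}=0$ --- this is exactly where the hypothesis $y\in Ker^{\perp}(A_j)$ is consumed. Summing the $d_{aj}$ from $a=1$ to $i-1$ then produces \eqref{eq3.46} with $z_{1j}$ arbitrary, up to the orientation sign of the rows which I would fix by direct matching against \eqref{eq3.22}.

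For the second assertion I would pin down the free constant by demanding $z_{\cdot j}\in Ker^{\perp}(A_j)$, that is $\sum_{i=1}^{n}z_{ij}=0$. Substituting \eqref{eq3.46} and interchanging the order of the double summation, the difference term indexed by $a$ is counted once for every $i$ with $i>a$, i.e. exactly $n-a$ times; solving the resulting scalar equation for $z_{1j}$ yields the weights $(n-a)/n$ and hence \eqref{eq3.47}. Uniqueness within $Ker^{\perp}(A_j)$ is then immediate, since any two such solutions differ by an element of $Ker(A_j)\cap Ker^{\perp}(A_j)=\{0\}$.

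The computation carries no serious obstacle; the two places that genuinely demand care are the consistency check at the terminal row, which is the only point where the orthogonality hypothesis on $y$ is actually used, and the order-of-summation swap that produces the $(n-a)/n$ weights in \eqref{eq3.47}. I would treat the global sign as routine bookkeeping to be reconciled against the row orientation of \eqref{eq3.22}.
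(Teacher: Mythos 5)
Your proof is correct, and in fact the paper states Lemma \ref{le3.4} with no proof at all, so there is nothing to compare against directly; your reduction to a telescoped first-order recursion in the differences $z_{i+1,j}-z_{ij}$ is the same device (sequential row addition) that the authors use in their proof of Proposition \ref{prop3.7}, and your consistency check at the last row is precisely where the hypothesis $y\in Ker^{\perp}(A_{j})$ is consumed, as you say. One point you defer as ``routine bookkeeping'' deserves to be made explicit, because it is not your sign that is off but the paper's: with $A_{j}$ exactly as printed in \eqref{eq3.22} and the detailed-balance notation \eqref{eq3.27}, the telescoping gives $q_{ij}\,(z_{i+1,j}-z_{ij})=\sum_{\beta=1}^{i}y_{\beta j}$ for $1\le i\le n-1$, hence
\begin{equation*}
z_{ij}=z_{1j}+\sum_{a=1}^{i-1}\Bigl(\sum_{\beta=1}^{a}\frac{y_{\beta j}}{q_{aj}}\Bigr),
\qquad
z_{1j}=-\sum_{a=1}^{n-1}\Bigl(\frac{n-a}{n}\cdot\sum_{\beta=1}^{a}\frac{y_{\beta j}}{q_{aj}}\Bigr),
\end{equation*}
i.e.\ both displayed formulae acquire a plus/minus flip relative to \eqref{eq3.46}--\eqref{eq3.47}. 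The formulae as printed in the lemma are the solution of $A_{j}\cdot z=-y$, equivalently $-A_{j}^{T}\cdot z=y$, which is the equation that actually arises in the applications \eqref{eq3.42} and \eqref{eq3.54}, where $-A_{j}^{T}\cdot g_{ij}^{1}=\tilde{w}_{ij}-g_{ij}^{0}=y_{ij}$ up to kernel terms. So either the lemma's equation should read $-A_{j}\cdot z=y$, or its two formulae should change sign; your version is the correct one for the equation as literally stated, and since \eqref{eq3.46} and \eqref{eq3.47} flip sign together, the internal consistency of the lemma (and its later use) is unaffected. Your counting of the $(n-a)/n$ weights (each difference term indexed by $a$ appears once for each $i>a$, i.e.\ $n-a$ times) and the uniqueness argument via $Ker(A_{j})\cap Ker^{\perp}(A_{j})=\{0\}$ are both correct.
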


Notice that formulae \eqref{eq3.46} and \eqref{eq3.47} yield $z_{ij}=g_{ij}^{1}$ when $y_{ij}=\tilde{w}_{ij}-g_{ij}^{0}$. In particular, for $g_{ij}^{1}$ we find the explicit expression:

\begin{equation}\label{eq3.48}
g_{ij}^{1}=\sum_{a=1}^{n-1}\Bigl(\bigl(\mathbf{1}(i>a)\cdot\frac{n-a-1}{n}+\mathbf{1}(i\leq a)\cdot\frac{n-a}{n}\bigr)\cdot\bigl(\frac{a}{q_{aj}}\cdot\sum_{\kappa\in H}\frac{\tilde{w}_{\kappa j}}{n}-\sum_{\beta=1}^{a}\frac{\tilde{w}_{\beta j}}{q_{aj}}\bigr)\Bigr),
\end{equation}

\noindent where $\mathbf{1}(\cdot)$ is the indicator function.

Regarding the consistency condition \eqref{eq3.19}, in the main order in small $\delta$ it can be written in the equivalent form:

\begin{equation}\label{eq3.49}
\sum_{i}\tilde{w}_{ik}<\sum_{i}\tilde{w}_{ij},
\end{equation}

\noindent for all $i\in H$, $k,j \in B$. Given that $\tilde{w}_{\cdot,\cdot}$ does not depend on $\delta$, this leads to the  interesting result that in the equilibrium of the asymptotic regime of small $\delta$, only those strategic levels $j\in B$ are occupied (that is, $x_{j}^{0}\neq 0$), where the sum $\sum_{i}\tilde{w}_{ij}$ obtains its maximum. 

For simplicity, let us further consider the following assumption;

\begin{assumption}\label{as3.5}
There exists a unique behavioural level $b\in B$, such that:

\begin{equation}\label{eq3.50}
\sum_{i}\tilde{w}_{ib}>\sum_{i}\tilde{w}_{ij}.
\end{equation}

\noindent for all $j\in B$, such that $j\neq b$.
\end{assumption}

Note that Assumption \ref{as3.5} implies that in any equilibrium $x^{*}$, with $\delta$ sufficiently small, all terms with $j\neq b$ become irrelevant for the analysis. 

We, thus, have the following result:

\begin{proposition}\label{prop3.8}
Let Assumptions \ref{as3.2}, \ref{as3.3}, \ref{as3.4} and \ref{as3.5} hold. Consider the ID$_{1}$ regime. Then, the solution to the stationary problem described by \eqref{eq3.17}, \eqref{eq3.18} and \eqref{eq3.19}, in the main order in small $\delta$, is given by:

\begin{equation}\label{eq3.51}
x_{ib}^{*}=x_{ib}^{0*}=1/n,\quad x_{i\kappa}^{0*}=0\enskip \forall \kappa\neq b\in B, i\in H,\quad g_{ib}=\delta^{-1}\cdot g_{ib}^{0}=\delta^{-1}\cdot\sum_{i}\tilde{w}_{ib}/n,
\end{equation}

\noindent where $x_{ij}^{0*}$ is a stable fixed point of \eqref{eq3.6}.
\end{proposition}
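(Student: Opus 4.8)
The plan is to assemble the pieces already established — the zero-order occupation density from Proposition \ref{prop3.7}, the leading-order payoff from \eqref{eq3.45}, the reduced consistency condition \eqref{eq3.49}, and the stability from Lemma \ref{lem3.3} — and to verify that they are mutually compatible in the ID$_1$ regime. First I would fix the structure within each strategy level. In ID$_1$ we have $\delta_{int}=\delta^2$, so at the main order in $\delta$ the $\delta_{int}$-corrections to the density are negligible and $x_{ij}\approx x_{ij}^{0*}$; by Proposition \ref{prop3.7} the zero-order fixed-point equation \eqref{eq3.25} coming from \eqref{eq3.18} forces the stationary density inside each occupied level $j$ to be uniform, $x_{ij}^{0*}=x_j^{0*}/n$. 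What remains is to decide \emph{which} levels $j\in B$ carry positive mass, and this is dictated by the consistency condition \eqref{eq3.19}.

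Second, I would analyse \eqref{eq3.19} asymptotically. By \eqref{eq3.43}--\eqref{eq3.45} the leading payoff $g_{ij}^0=\sum_i\tilde w_{ij}/n$ is independent of the hierarchy index $i$. Inserting the expansion \eqref{eq3.41} into \eqref{eq3.19}, the dominant contribution is the $\delta^{-1}$ term $\delta^{-1}(g_{ik}^0-g_{ij}^0)$, which swamps the $O(1)$ transition fee $f_{jk}^B$. Hence, at leading order, \eqref{eq3.19} for an occupied level $j$ is equivalent to $g_{ik}^0\le g_{ij}^0$ for every $k$, that is, to \eqref{eq3.49}: an occupied $j$ must maximise $\sum_i\tilde w_{ij}$. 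Assumption \ref{as3.5} guarantees the maximiser is the unique level $b$, so $x_{i\kappa}^{0*}=0$ for every $\kappa\neq b$; the normalisation $\sum_j x_j^{0*}=1$ then gives $x_b^{0*}=1$ and, by the uniformity above, $x_{ib}^{0*}=1/n$. The payoff at $b$ is read directly off \eqref{eq3.45}, yielding $g_{ib}=\delta^{-1}g_{ib}^0=\delta^{-1}\sum_i\tilde w_{ib}/n$ to main order, which is precisely \eqref{eq3.51}.

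Third, I would confirm stability and internal consistency. Stability of $x_{ij}^{0*}$ as a fixed point of \eqref{eq3.6} is exactly Lemma \ref{lem3.3} (under Assumptions \ref{as3.2}, \ref{as3.3}): the linearisation $\Lambda$ has $n(n-1)$ strictly negative eigenvalues and a single zero eigenvalue whose algebraic and geometric multiplicities coincide, so the fixed point is Lyapunov stable. It then remains to check that the common control $u^{com}=0$ is genuinely consistent: since $g_{ib}^0\ge g_{ik}^0$ strictly for $k\neq b$, condition \eqref{eq3.19} holds with room to spare at level $b$, so no agent at an occupied state wishes to switch, and the $\lambda$-terms in \eqref{eq3.17}--\eqref{eq3.18} indeed drop out in the limit $\lambda\to\infty$, as anticipated after Proposition \ref{prop3.7}.

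The main obstacle I anticipate is the asymptotic bookkeeping in the second step: one must argue cleanly that the $\delta^{-1}$ term strictly dominates in \eqref{eq3.19}, so that the reduction to \eqref{eq3.49} is valid, and that the \emph{strict} inequality in Assumption \ref{as3.5} is exactly what rules out every non-maximising level — a merely weak maximum would leave several levels admissible and break the single-level conclusion. A secondary, more routine point is verifying that the triple $(x^*,g,u^{com})$ solves the coupled stationary system \eqref{eq3.17}--\eqref{eq3.19} \emph{simultaneously} rather than each relation in isolation; once the occupied level is pinned down, however, this is a direct substitution.
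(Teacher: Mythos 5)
Your proposal is correct and takes essentially the same route as the paper: Proposition \ref{prop3.7} for the uniform within-level distribution, the expansion \eqref{eq3.41} with equations \eqref{eq3.42}--\eqref{eq3.45} for the leading-order payoff $g_{ij}^{0}=\sum_{i}\tilde{w}_{ij}/n$, the reduction of the consistency condition \eqref{eq3.19} to \eqref{eq3.49} combined with Assumption \ref{as3.5} to single out the level $b$, and Lemma \ref{lem3.3} for stability. The paper gives no separate proof beyond exactly this assembly of the preceding results, so your reconstruction, including the observation that the strictness in Assumption \ref{as3.5} is what pins down a unique occupied level, matches it step for step.
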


\begin{remark}
If we continue in the next order of our perturbation analysis (subsequently in the second next order, and so forth) we can obtain explicit approximate solutions with arbitrary precision.
\end{remark}

Next we consider the ID$_{2}$ regime. In this case, we look at the solutions to \eqref{eq3.18} in the next order with respect to small $\delta$. In view of \eqref{eq3.51}, we write \eqref{eq3.26} in the form:

\begin{equation}\label{eq3.52}
A_{b}\cdot x_{ib}^{1}+(q_{i-1,b}^{+b}-q_{ib}^{+b}+q_{i+1,b}^{-b}-q_{ib}^{-b})/n^{2}=0,
\end{equation}

\noindent where $\sum_{i}x_{ib}^{1}=0$, and the usual convention for the boundary terms, $i=1,n$, apply. 

Note that the right-hand side of Equation \eqref{eq3.52} belongs to $Ker^{\perp}(A_{j})$, implying that:

\begin{equation}\label{eq3.53}
\sum_{i}-(q_{i-1,b}^{+b}-q_{ib}^{+b}+q_{i+1,b}^{-b}-q_{ib}^{-b})/n^{2}=0,
\end{equation}

Moreover, given that $x_{ib}^{1}\in Ker^{\perp}(A_{j})$, we can identify $x_{ib}^{1}$ applying Lemma \ref{le3.4}. Formulae \eqref{eq3.46}, \eqref{eq3.47} yield $z_{ib}=x_{ib}^{1}$ when $y_{ib}=-(q_{i-1,b}^{+b}-q_{ib}^{+b}+q_{i+1,b}^{-b}-q_{ib}^{-b})/n^{2}$.

Regarding the solution to \eqref{eq3.20} in ID$_{2}$, substituting \eqref{eq3.41} into \eqref{eq3.20}, and equating terms of order $\delta^{-1}$, $\delta^{0}$, $\delta^{1}$, we get respectively the following equations:

\begin{align}
\begin{gathered}\label{eq3.54}
A_{j}^{T}\cdot g_{ij}^{0}=0\\
-A^{T}_{j}\cdot g_{ij}^{1}+g_{ij}^{0}-E_{j}^{0T}\cdot g_{ij}^{0}=\tilde{w}_{ij}\\
-A_{j}^{T}\cdot g_{ij}^{2}+g_{ij}^{1}-E_{j}^{0T}\cdot g_{ij}^{1}-E_{j}^{1T}\cdot g_{ij}^{0}=-f_{i}^{H}\cdot\sum_{k}q_{ij}^{-k}\cdot x_{ik}^{0},
\end{gathered}
\end{align}

\noindent where the notation $E^{1}_{j}$ corresponds to the matrix $E_{j}$ containing only elements of order $O(\delta_{int})$ (we use respectively the notation $E^{1T}_{j}$ for the transpose matrix).

The first two equations in \eqref{eq3.54} are identical with the corresponding equations in \eqref{eq3.42} (recall that $E_{j}^{0T} g_{ij}^{0}=0$ for a uniform $g_{ij}^{0}$), and provide the same results expressed through \eqref{eq3.43}, \eqref{eq3.45}. Looking at the third equation in \eqref{eq3.54}, and noting that $E_{j}^{1T} g_{ij}^{0}=0$, we observe that $(g_{ij}^{1}-E_{j}^{0T} g_{ij}^{1}+f_{i}^{H}\cdot\sum_{k}q_{ij}^{-k}\cdot x_{ik}^{0})\in Ker^{\perp}(A_{j})$, implying that $g_{ij}^{1}$ can be uniquely identified through formula \eqref{eq3.46} of Lemma \ref{le3.4}, with $z_{ij}=g_{ij}^{1}$ and $y_{ij}=\tilde{w}_{ij}-g_{ij}^{0}$, under the condition:

\begin{equation}\label{3.55}
\sum_{i}(g_{ij}^{1}-E_{j}^{0T}\cdot g_{ij}^{1}+f_{i}^{H}\sum_{k}q_{ij}^{-k}\cdot x_{ik}^{0})=0.
\end{equation}

Last we consider the ID$_{3}$ regime. Substituting \eqref{eq3.41} into \eqref{eq3.20}, but equating now terms of order $\delta^{-2}$, $\delta^{-1}$, $\delta^{0}$, we get the equations (in analogy to \eqref{eq3.42},~\eqref{eq3.54}):

\begin{align}
\begin{gathered}\label{eq3.56}
A_{j}^{T}\cdot g_{ij}^{0}=0\\
E_{j}^{0T}\cdot g_{ij}^{0}=0\\
-A_{j}^{T}\cdot g_{ij}^{1}+g_{ij}^{0}-E_{j}^{1T}\cdot g_{ij}^{0}=\tilde{w}_{ij}.
\end{gathered}
\end{align}

Again, the first and the third equations in \eqref{eq3.56} lead to the same results with the first and the second equations in \eqref{eq3.42}, namely to \eqref{eq3.43} and \eqref{eq3.45} respectively, while the second equation in \eqref{eq3.56} always holds for a uniform $g^{0}_{ij}$.

We, thus, have the following result:

\begin{proposition}\label{le3.5}
The solution to the stationary consistency problem in the main order in small $\delta$ in ID$_{2}$ and ID$_{3}$, is the same with the one identified in Proposition \ref{prop3.8} for ID$_{1}$.
\end{proposition}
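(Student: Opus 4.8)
The plan is to re-run the same two-sided (forward fixed-point together with backward HJB) leading-order analysis that established Proposition \ref{prop3.8}, and to show that in both ID$_2$ and ID$_3$ the equations governing the \emph{main} term collapse to exactly the equations \eqref{eq3.25}, \eqref{eq3.43} and \eqref{eq3.45} already solved for ID$_1$. The decisive structural fact I would isolate first is that each of the matrices $A_j$ and $E_j$ has vanishing column sums (they are honest transition generators), so their transposes annihilate the constant vector. Consequently $A_j^T g_{ij}^0=0$ forces $g_{ij}^0$ to be uniform in $i$ (by Proposition \ref{prop3.7}, $\mathrm{Ker}(A_j^T)$ is spanned by $(1,\dots,1)$), and for any such uniform $g_{ij}^0$ one has $E_j^{0T} g_{ij}^0=0$ and $E_j^{1T} g_{ij}^0=0$. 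This single observation is what makes the interaction corrections irrelevant at main order, independently of the precise relation between $\delta_{int}$ and $\delta_{dis}$.

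First I would treat the forward equation. Since the fixed-point system \eqref{eq3.18}/\eqref{eq3.21} contains only $\delta_{int}$ and no discounting, its expansion \eqref{eq3.24} is insensitive to how $\delta_{int}$ is tied to $\delta_{dis}$; the leading equation is $A_j x_{ij}^0=0$ in every regime, and Proposition \ref{prop3.7} delivers $x_{ij}^0=x_j^0/n$, uniform in $i$. This fixes the occupation densities at main order identically across ID$_1$, ID$_2$, ID$_3$, up to the still-undetermined total masses $x_j^0$.

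Next I would handle the backward HJB equation. In ID$_2$ I substitute \eqref{eq3.41} into \eqref{eq3.20} and collect powers of $\delta$, obtaining \eqref{eq3.54}; the first equation again pins $g_{ij}^0$ to $\mathrm{Ker}(A_j)$, i.e. $g_{ij}^0=a_j x_{ij}^0$ as in \eqref{eq3.43}, and because $g_{ij}^0$ is uniform the term $E_j^{0T} g_{ij}^0$ drops out of the second equation, which therefore reduces to $-A_j^T g_{ij}^1+g_{ij}^0=\tilde{w}_{ij}$, exactly the second line of \eqref{eq3.42}; the Fredholm solvability condition then reproduces \eqref{eq3.45}, namely $g_{ij}^0=\sum_i \tilde{w}_{ij}/n$. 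For ID$_3$ the accounting shifts by one power of $\delta$, yielding \eqref{eq3.56}: there the same uniformity makes the intermediate relation $E_j^{0T} g_{ij}^0=0$ automatic, while the first and third equations are verbatim analogues of the first and second equations of \eqref{eq3.42}, again producing \eqref{eq3.43} and \eqref{eq3.45}. In both regimes the main-order payoff is $g_{ij}^0=\sum_i \tilde{w}_{ij}/n$, uniform in $i$.

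Finally I would feed these into the consistency condition \eqref{eq3.19}. Writing $g_{ij}=\delta^{-1} g_{ij}^0+O(1)$, the inequality $g_{ik}-g_{ij}-f_{jk}^B\le 0$ at its dominant order $\delta^{-1}$ becomes $g_{ik}^0\le g_{ij}^0$, i.e. \eqref{eq3.49}, so that only the strategy levels maximizing $\sum_i \tilde{w}_{ij}$ remain occupied; Assumption \ref{as3.5} then selects the unique level $b$, forcing $x_{ib}^*=1/n$ and $x_{i\kappa}^*=0$ for $\kappa\neq b$, with $g_{ib}=\delta^{-1}\sum_i \tilde{w}_{ib}/n$. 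This is precisely \eqref{eq3.51}, whose stability was already settled in Lemma \ref{lem3.3}. I expect the only delicate point to be the bookkeeping of which interaction terms survive at each order: one must verify carefully that every cross term $E_j^{0T} g_{ij}^0$ and $E_j^{1T} g_{ij}^0$ genuinely vanishes by uniformity, including at the boundary indices $i=1,n$ where the conventions \eqref{eq3.7} apply, and that no $\delta_{int}$-correction leaks into the main order. Once that is checked, the three regimes share the same leading-order equations and hence the same solution.
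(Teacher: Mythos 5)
Your proposal is correct and follows essentially the same route as the paper: expand \eqref{eq3.20} in powers of $\delta$ separately for ID$_2$ and ID$_3$ (obtaining the systems \eqref{eq3.54} and \eqref{eq3.56}), observe that the interaction terms $E_j^{0T}g_{ij}^0$ and $E_j^{1T}g_{ij}^0$ vanish on uniform vectors so that the leading equations collapse to \eqref{eq3.43} and \eqref{eq3.45} from the ID$_1$ analysis, note that the forward fixed-point equation involves only $\delta_{int}$ and hence is regime-independent at main order, and conclude via the consistency condition \eqref{eq3.49} and Assumption \ref{as3.5}. Your explicit framing of the key fact as vanishing column sums of $A_j$ and $E_j$, and your attention to the boundary conventions \eqref{eq3.7}, merely make explicit what the paper states in passing, so there is nothing substantive to add.
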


\section{Time-dependent Problem}\label{sec3.4}

The solution to a non-linear Markov game of mean-field type like the one we consider here (on a finite time horizon), defines an $epsilon$-Nash equilibrium of the corresponding game with a finite number of players, see, e.g., \cite{bas}. Having identified the solution to the stationary MFG consistency problem, we need next to look at the time-dependent consistency problem in order to validate our results for initial/terminal conditions other than those given by the solution of the stationary problem. We further need to investigate the stability of the fixed point $x_{ij}^{0*}$ (see Lemma \ref{lem3.3}) without necessarily assuming that from the very beginning all players apply the same stationary control $u^{com}=(u_{ij\to i\kappa}=0)$.

For the full time-dependent problem, the HJB equation for the discounted optimal payoff $e^{-\delta_{dis}\cdot t}\cdot g_{ij}(t)$ of an individual at state $(i,j)$ with any planning horizon $T$ is given by \eqref{eq3.14}, where the occupation density vector $x=(x_{ij})$ is also time varying. For definiteness, we shall focus on the ID$_{1}$ regime (the same method applies for ID$_{2}$, ID$_{3}$ regimes). Our aim is to show that by fixing the control $u_{i\alpha \to i\beta}=0$ in \eqref{eq3.14}, $\forall i\in H, \alpha,\beta\in B$, the solution to the resulting system:

\begin{align}
\begin{gathered}\label{eq3.58}
\dot g_{i\alpha} + w_{i\alpha}+q^+_{i\alpha}\cdot (g_{i+1,\alpha}-g_{i\alpha})+q^-_{i\alpha}\cdot (g_{i-1,\alpha}-g_{i\alpha}-f_i^H)\\
+\sum_{k\in B}\delta_{int}\cdot x_{ik}\cdot (q^{+k}_{i\alpha}\cdot (g_{i+1,\alpha}-g_{i\alpha})+q^{-k}_{i\alpha}\cdot (g_{i-1,\alpha}-g_{i\alpha}-f_i^H) )=\delta_{dis}\cdot g_{i\alpha}(t),
\end{gathered}
\end{align}

\noindent will be consistent, that is, the control $u_{i\alpha\to i\beta}=0$ will indeed give a maximum in \eqref{eq3.14} in all times.
Fixing the control $u_{i\alpha\to i\beta}=0$, $\forall i\in H,\alpha,\beta\in B$, is actually equivalent to assuming that:

\begin{equation}\label{eq3.59}
g_{i\beta}(T)-f_{\alpha\beta}^{B}\leq g_{i\alpha}(T).
\end{equation}

Our aim here is to show that starting with a terminal condition belonging to the cone defined by \eqref{eq3.59}, we shall stay inside the cone for all $t\leq T$. Therefore, it is sufficient to show that on the boundary of this cone the inverted tangent vector of \eqref{eq3.58} is never directed outside the cone. The necessary condition that needs to be satisfied for this to be true for any boundary point $g_{j\beta}-f_{\alpha\beta}^{B}= g_{j\alpha}$ is the following:

\begin{equation}\label{eq3.60}
\dot{g}_{j\alpha}-\dot{g}_{j\beta}\leq 0,
\end{equation}

\noindent where,

\begin{align}
\begin{gathered}\label{eq3.61}
\dot{g}_{j\alpha}-\dot{g}_{j\beta}=\delta_{dis}\cdot (g_{j\alpha}-g_{j\beta})+(w_{j\beta}-w_{j\alpha})+q_{j\beta}^{+}\cdot (g_{j+1,\beta}-g_{j\beta})\\-q_{j\alpha}^{+}\cdot (g_{j+1,\alpha}-g_{j\alpha})+ q_{j\beta}^{-}\cdot (g_{j-1,\beta}-g_{j\beta}-f_{j}^{H})-q_{j\alpha}^{-}\cdot (g_{j-1,\alpha}-g_{j\alpha}-f_{j}^{H})\\+ \sum_{k\in B}\delta_{int}\cdot x_{jk}\cdot\bigl(q^{+k}_{j\beta}\cdot (g_{j+1,\beta}-g_{j\beta})+q^{-k}_{j\beta}\cdot (g_{j-1,\beta}-g_{j\beta}-f_j^H)\vspace{-3.4mm}\\ -q^{+k}_{j\alpha}\cdot (g_{j+1,\alpha}-g_{j\alpha})-q^{-k}_{j\alpha}\cdot (g_{j-1,\alpha}-g_{j\alpha}-f_j^H)\bigr) .
\end{gathered}
\end{align}

We substitute into \eqref{eq3.61} $g_{ij}$ and $x_{ij}$, taken from \eqref{eq3.41} and \eqref{eq3.24} respectively. Assuming, then, that $f_{j}^{H}$ is independent of $\delta$, and equating terms of similar order, in the main order $O(\delta^{-1})$ in small $\delta$, condition \eqref{eq3.60} is equivalent to (recall that we are in the ID$_{1}$ regime):

\begin{equation}\label{eq3.62}
q_{j\beta}^{+}\cdot (g_{j+1,\beta}^{0}-g_{j\beta}^{0})+q_{j\beta}^{-}\cdot (g_{j-1,\beta}^{0}-g_{j\beta}^{0})\leq q_{j\alpha}^{+}\cdot (g_{j+1,\alpha}^{0}-g_{j\alpha}^{0})+q_{j\alpha}^{-}\cdot (g_{j-1,\alpha}^{0}-g_{j\alpha}^{0}).
\end{equation}

Note that in the main order $O(\delta^{-1})$ in small $\delta$ (assuming that $f_{\alpha\beta}^{B}$ is also independent of $\delta$) for the specified boundary point of the cone, we get:

\begin{equation}\label{eq3.63}
g_{j\beta}^{0}=g_{j\alpha}^{0},
\end{equation}

\noindent while for all the other $i\in H$, such that $i\neq j$, will be:

\begin{equation}\label{eq3.64}
g_{i\beta}^{0}\leq g_{i\alpha}^{0}.
\end{equation}

Combining \eqref{eq3.63} and \eqref{eq3.64} we obviously get:

\begin{equation}\label{eq3.65}
g_{j\alpha}^{0}-g_{i\alpha}^{0}\leq g_{j\beta}^{0}-g_{i\beta}^{0},
\end{equation}

\noindent and rewriting \eqref{eq3.62} in the equivalent form:

\begin{equation}\label{eq3.66}
q_{j\alpha}^{+}\cdot (g_{j\alpha}^{0}-g_{j+1,\alpha}^{0})+q_{j\alpha}^{-}\cdot (g_{j\alpha}^{0}-g_{j-1,\alpha}^{0})\leq q_{j\beta}^{+}\cdot (g_{j\beta}^{0}-g_{j+1,\beta}^{0})+q_{j\beta}^{-}\cdot (g_{j\beta}^{0}-g_{j-1,\beta}^{0}),
\end{equation}

\noindent we check that condition \eqref{eq3.66} is satisfied when $q_{i\alpha}\leq q_{i\beta}$, $\forall i\in H$ (the first term is smaller or equal than the third term, the second term is smaller or equal than the fourth term).

But also for the case when $q_{i\beta}<q_{i\alpha}$, $\forall i\in H$, rewriting \eqref{eq3.65} in the equivalent form:

\begin{equation}\label{eq3.67}
g_{i\beta}^{0}-g_{j\beta}^{0}\leq g_{i\alpha}^{0}-g_{j\alpha}^{0},
\end{equation}

\noindent we check that \eqref{eq3.62} is satisfied (again the first term is smaller or equal than the third term, the second term is smaller or equal than the fourth term).

Thus we obtain the following main result:

\begin{theorem}\label{prop3.9}
Let Assumptions \ref{as3.2}, \ref{as3.3}, \ref{as3.4} and \ref{as3.5} hold. Assume additionally, $\forall \alpha,\beta\in B$, that:

\begin{equation}
q_{i\alpha}\leq q_{i\beta} \quad \text{or} \quad q_{i\beta}<q_{i\alpha}, \quad \forall i \in H.
\end{equation}

\noindent Then, for sufficiently small discounting $\delta_{dis}=\delta$, and relatively smaller binary interaction coefficient $\delta_{int}=\delta^{2}$, in the main order in small $\delta$, for any $T>t$, and for any initial occupation probability distribution $x(t)$, and any terminal payoffs such that:

\begin{equation*}
g_{i\beta}(T)-f_{\alpha\beta}^{B}\leq g_{i\alpha}(T),
\end{equation*} 

\noindent there exists a unique solution to the time-dependent discounted MFG consistency problem such that the control $u$ is stationary, and is given by $u_{i\alpha\to i\beta}=0$, $\forall i \in H$, $\forall\alpha, \beta \in B$, x(s) stays near the fixed point of Proposition \ref{prop3.8} as $s\to T$, and $g_{ij}(s)$ stays near the stationary solution of Proposition \ref{prop3.8} (almost for all time), for large $T-t$.
\end{theorem}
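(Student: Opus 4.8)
The plan is to recognize that the whole statement reduces to proving that the trivial (no-switching) control is self-consistent along the forward-backward system, and that this consistency is inherited from a terminal cone. The MFG consistency problem couples the forward kinetic equation \eqref{eq3.6}, run from the initial datum $x(t)$, with the backward HJB equation \eqref{eq3.14}, run from the terminal payoff $g(T)$, through a common control $u$. Because agents use only pure strategies, the bracket $\max_u\{u_{i\alpha\to i\beta}(g_{i\beta}-g_{i\alpha}-f^B_{\alpha\beta})\}$ in \eqref{eq3.14} is maximised by $u=0$ precisely when $g$ lies in the cone $C=\{g: g_{i\beta}-f^B_{\alpha\beta}\le g_{i\alpha}\ \forall i,\alpha,\beta\}$ of \eqref{eq3.59}. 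So the first step is to set $u_{i\alpha\to i\beta}=0$, observe that the backward system then collapses to the \emph{linear} ODE system \eqref{eq3.58} (which has a unique solution for every terminal condition), and reduce the entire theorem to the single claim: if $g(T)\in C$ then the backward trajectory stays in $C$ for all $s\le T$, so that $u=0$ remains optimal at every time.

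The heart of the argument is this backward invariance of $C$, which I would establish by a Nagumo/viability criterion: it suffices that on each boundary face $\{g_{j\beta}-f^B_{\alpha\beta}=g_{j\alpha}\}$ the inverted (backward-time) vector field of \eqref{eq3.58} does not point out of $C$, i.e. condition \eqref{eq3.60}, $\dot g_{j\alpha}-\dot g_{j\beta}\le 0$. I would compute this difference explicitly via \eqref{eq3.61}, substitute the expansions \eqref{eq3.41} and \eqref{eq3.24}, and keep only the leading order $O(\delta^{-1})$; in the ID$_1$ scaling the discounting, the transition fees $f^B,f^H$, and the $O(\delta_{int})=O(\delta^2)$ binary-interaction terms are all subdominant and drop out, leaving \eqref{eq3.62}. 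On the face itself the leading coefficients satisfy $g_{j\beta}^0=g_{j\alpha}^0$ (\eqref{eq3.63}), while membership of $C$ at the other hierarchy levels forces $g_{i\beta}^0\le g_{i\alpha}^0$ for $i\ne j$ (\eqref{eq3.64}); combining these gives \eqref{eq3.65}. The ordering hypothesis on the rates is exactly what closes the inequality: when $q_{i\alpha}\le q_{i\beta}$ for all $i$ one reads \eqref{eq3.62} off term by term, and when $q_{i\beta}<q_{i\alpha}$ one rewrites \eqref{eq3.65} as \eqref{eq3.67} and again compares term by term. This verifies \eqref{eq3.60} on every face in both regimes, establishing invariance to leading order.

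For the forward side, with $u=0$ fixed the kinetic equation \eqref{eq3.6} is precisely the linear system of Lemma \ref{lem3.3}, whose fixed point $x^{0*}_{ib}=1/n$ is Lyapunov stable; its spectral structure ($n(n-1)$ negative eigenvalues and a single zero eigenvalue of matched algebraic and geometric multiplicity) is what keeps $x(s)$ near the turnpike as $s$ runs toward $T$. Since $\delta_{int}=\delta^2$, the dependence of \eqref{eq3.58} on $x$ through the $E_j$-terms is $O(\delta^2)$, so to the order we work the forward and backward equations decouple, and the backward solution relaxes to the stationary payoff $\delta^{-1}g^0_{ib}=\delta^{-1}\sum_i\tilde w_{ib}/n$ of Proposition \ref{prop3.8}; the small discounting $\delta_{dis}=\delta$ fixes the relaxation time as $O(1/\delta)$, which is what yields the turnpike conclusion that $g(\cdot)$ sits near the stationary value for almost all of a long horizon $T-t$. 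Uniqueness within the class of stationary controls is then immediate, because once $u=0$ is imposed both \eqref{eq3.58} and \eqref{eq3.6} are well-posed linear ODEs with the prescribed terminal and initial data.

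The main obstacle is the boundary inequality of the second paragraph: one must check \eqref{eq3.60} on \emph{every} face of $C$ and in both orderings of the rates, and in particular guarantee that the leading-order reduction is legitimate, i.e. that no subdominant ($\delta^0$ or $\delta^2$) term can flip the sign on a face where \eqref{eq3.62} holds only with equality. A secondary, more delicate point is making the turnpike and ``almost all time'' claims precise: this is genuinely an asymptotic statement (leading order in $\delta$, large $T-t$), and since Lemma \ref{lem3.3} gives only Lyapunov and not asymptotic stability of $x^{0*}$, the two boundary layers — near $t$ for the forward variable $x$ and near $T$ for the backward variable $g$ — should be acknowledged explicitly rather than argued away.
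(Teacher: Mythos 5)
Your proposal is correct and follows essentially the same route as the paper's own proof: fixing $u_{i\alpha\to i\beta}=0$, reducing the theorem to backward invariance of the cone \eqref{eq3.59}, verifying the boundary condition \eqref{eq3.60} via \eqref{eq3.61} at leading order $O(\delta^{-1})$ to obtain \eqref{eq3.62}, and closing the inequality with \eqref{eq3.63}--\eqref{eq3.67} under the two rate orderings, with Lemma \ref{lem3.3} and Proposition \ref{prop3.8} supplying the forward-side and turnpike conclusions. Your two flagged caveats (subdominant terms on faces where \eqref{eq3.62} is an equality, and the Lyapunov-versus-asymptotic stability issue behind the ``almost all time'' claim) are points the paper itself leaves implicit, so they are fair criticisms rather than gaps in your argument.
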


\section{Discussion}\label{sec3.5}
\noindent In this paper we formulate the interaction of a large number of small players under the pressure of a major player (principal), on $n$-dimensional arrays, having in mind the paradigm of individuals defending against a bio-terrorist; alternatively, the similar context of corrupted tax inspectors against a benevolent authority. The $n$-dimensional arrays dual structure naturally describes on the one hand the distribution of individuals among $m$ levels of `behaviour' (e.g. levels of defence) and on the other, their distribution according to a phenotypic characteristic among $n$ levels of `hierarchy' (e.g. levels of infection). Transitions on the first network structure are mainly subject to the individuals' control, while transitions on the second are mainly subject to the principal's pressure. Transitions on both structures may as well be an outcome of the individuals' binary interactions. Our model is a performance of a finite state non-linear Markov game combining mean-field, evolutionary, and pressure-resistance types of interaction. For our analysis, we consider the discounted mean-field game consistency problem. According to the general framework of mean-field games we analyse the forward-backward system of coupled equations, the kinetic equations governing the evolution of the individuals' distribution among the $n\times m$ states (forward equation), and the Hamilton-Jacobi-Bellman equation giving the individuals' optimal payoff (backward equation). We solve the stationary problem and we provide a link of the stationary solution to the time-dependent problem. For simplicity, we work in the asymptotic regimes of fast execution of personal decisions, weak binary interactions, and small payoff discounting in time. Considering a stationary control that is consistent with the assumption of fast execution of personal decisions, in the main order of small payoff discounting in time (or in the main order of weak binary interactions), we find that individuals will be uniformly distributed among the `behaviours' of the unique `hierarchy' level where the sum of rewards is maximised, and we obtain the optimal payoff as a function of these rewards. We show that there is a unique solution to the time-dependent problem, that is very close to the stationary solution. Our simplifications, while necessary for concrete calculations, represent the first step towards a more comprehensive treatment of the game that we have introduced and explicitly formulated.

\section*{Acknowledgements}
Stamatios Katsikas would like to acknowledge support from the Engineering and Physical Sciences Research Council (EPSRC). Vassilli Kolokoltsov would like to acknowledge support from the Russian Foundation for Basic Research (RFBR, grant No. 17-01-00069).

\section*{Conflicts of Interest}
The authors declare no conflict of interest.

\end{document}